\newtheorem{theorem}{Theorem}[section]
\newtheorem{lemma}[theorem]{Lemma}
\theoremstyle{definition}
\theoremstyle{remark}
\newtheorem{proposition}[theorem]{Proposition}
\numberwithin{equation}{section}
\numberwithin{equation}{section}
\numberwithin{equation}{section}
\def\no{\nonumber}
  \def\no{\nonumber}
\def\bW{{\mathbf W}}
  \def\bD{{\mathbf D}} 
 \def\bH{{\mathbf H}}
 \def\bW{{\mathbf W}} \def\bX{{\mathbf X}}
\def\bx{{\mathbf x}} \def\by{{\mathbf y}} 
\def\bbeta{{\boldsymbol{\beta}}}
 \def\bdelta{{\boldsymbol{\delta}}}
 \def\bgamma{{\boldsymbol{\gamma}}}
 \def\bGamma{{\boldsymbol{\Gamma}}}
\def\bPhi{{\boldsymbol{\Phi}}}
  \def\bxi{{\boldsymbol{\xi}}}
\def\bSigma{{\boldsymbol{\Sigma}}}
\def\bvt{{\boldsymbol \vartheta}} 
\def\hbbeta{\widehat{\boldsymbol \beta}}
\def\real{\mathop{{\rm I}\kern-.2em\hbox{\rm R}}\nolimits}
\def\1overn{\frac{1}{n}}
\def\bel{\begin{eqnarray}\label}  \def\eel{\end{eqnarray}}
\def\bes{\begin{eqnarray*}}  \def\ees{\end{eqnarray*}}
\def\PT{\hbbeta_{1,\tau}^{\rm PT}}
\def\FM{\hbbeta_{1,\tau}^{\rm FM}}
\def\SM{\hbbeta_{1,\tau}^{\rm SM}}
\def\PS{\hbbeta_{1,\tau}^{\rm PS}}
\def\SS{\hbbeta_{1,\tau}^{\rm S}}
\def\btau{\bbeta_{1,\tau}}
\def\Ups{{\bf\Upsilon}}
\begin{document}

\title{Improved Quantile Regression Estimators when the Errors are Independently and Non-identically Distributed}


\author{Bahad{\i}r Y\"{u}zba\c{s}{\i}$^\dag$, Yasin Asar$^\S$, Ahmet Demiralp$^\clubsuit$ and M.\c{S}amil \c{S}{\i}k$^\ddag$}

\date{\today}
\maketitle

{\footnotesize
\center { \text{  $^\dag\ddag\clubsuit$ Department of Econometrics}\par
  { \text{ Inonu University}}\par
  {\text{ Malatya 44280, Turkey}}\par
  { \texttt{E-mail address: $^\dag$b.yzb@hotmail.com, $^\ddag$mhmd.sml85@gmail.com and $^\clubsuit$ahmt.dmrlp@gmail.com}}\par

  \vskip 0.2 cm

  \text{  $^\S$ Department of Mathematics-computer Sciences
}\par
  { \text{ Necmettin Erbakan University}}\par
  {\text{ Konya 42090, Turkey}}\par
  { \texttt{E-mail address: yasar@konya.edu.tr, yasinasar@hotmail.com}}

}}


\renewcommand\leftmark {\centerline{  \rm Quantile Shrinkage Estimation when the Errors are i.ni.d. Distributed}}
\renewcommand\rightmark {\centerline{ \rm  Quantile Shrinkage Estimation when the Errors are i.ni.d. Distributed}}

\renewcommand{\thefootnote}{}
\footnote{2010  {\it AMS Mathematics Subject Classification:}
62J05, 62J07.}

\footnote {Key words and phrases:  Sub-model, Full Model, Pretest and Stein Type Estimators, Quantile Regression, Penalty Estimation, Asymptotic and Simulation.\par

Corresponding author : Bahad{\i}r Y\"{u}zba\c{s}{\i} 
}

\begin{abstract}
In a classical regression model, it is usually assumed that the explanatory variables are independent of each other and error terms are normally distributed. But when these assumptions are not met, situations like the error terms are not independent or they are not identically distributed or both of these, LSE will not be robust. Hence, quantile regression has been used to complement this deficiency of classical regression analysis and to improve the least square estimation (LSE). In this study, we consider preliminary test and shrinkage estimation strategies for quantile regression models with independently and non-identically distributed (i.ni.d.) errors. A Monte Carlo simulation study is conducted to assess the relative performance of the estimators. Also, we numerically compare their performance with Ridge, Lasso, Elastic Net penalty estimation strategies. A real data example is presented to illustrate the usefulness of the suggested methods. Finally, we obtain the asymptotic results of suggested estimators
\end{abstract}

\maketitle

\section{Introduction}\label{sec1}

Consider a linear regression model
\begin{equation}
y_{i}=\bx_{i}'\bm{\beta }+\varepsilon _{i},\ \ \
i=1,2,...,n,  \label{lin.mod}
\end{equation}%
where $y_{i}$'s are responses, $\bx_{i}=\left(
x_{i1},x_{i2},...,x_{ip}\right)'$ are observation points, $%
\bm{\beta }=\left( \beta ,\beta _{2},...,\beta _{p}\right)
'$ is a vector of unknown regression coefficients, $\varepsilon _{i}{}$'$s$
are unobservable random errors and the superscript $\left( '\right) $
denotes the transpose of a vector or matrix. 
In this study, we consider that the design matrix has rank $p$ ($p\leq n$).

In a linear regression model, it is usually assumed that the
explanatory variables are independent of each other and error terms are normally distributed. However, in many areas, including econometrics, survival analysis and ecology, etc. data doesn't satisfy these assumptions. Firstly introduced by \cite{koenker1978regression}, quantile regression has been used to complement this deficiency of classical regression analysis and to improve the least square estimation (LSE). 

When using LSE, one usually obtains representation of the relationship between explanatory variables and dependent variables only at one point. However, it doesn't give information about the relationship at any other possible points of interest. Quantile regression gives a complete representation of the variables in the model and does not make any distributive assumption about the error term in the model. The main advantage of quantile regression against LSE is its flexibility in modeling the data in heterogeneous conditional distributions. 

On the basis of the quantile regression lies the expansion of the regression model to the conditional quantities of the dependent variable. A quantile is a one of the equally segmented subsets of a sorted sample of a population. If we need to formulate a quantile mathematically, let $\mathcal{F}$ be the distribution function of a random variable $Y=\left(y,y_{2},...,y_{n}\right)$ such that $\label{eq:F_Y}
\mathcal{F}_{Y}\left(y\right)=P(Y\le y)=\tau$ and $0\leq\tau\leq1$, the $\tau^{th}$ quantile function of $Y$, $\mathcal{Q}_\tau(\by)$ is defined to be 
\begin{equation}
\mathcal{Q}_\tau(\by)=\mathcal{F}_{Y}^{-1}(\tau)=\inf\left\{\by\vert \mathcal{F}_{Y}\left(\by\right)\ge\tau\right\}.
\end{equation}
For a random sample $\by=\left(y_{1},y_{2},\dots,y_{n}\right)$ with empirical distribution function $\widehat{\mathcal{F}}_{Y}\left(\tau\right)$ by solving minimization of error squares also an estimation of the $\tau^{th}$  quantile regression coefficients ($\FM$) can be defined by solving a minimization of absolute errors problem. \cite{wu2009variable} considered quantile regression for variable selection based on SCAD and adaptive-LASSO penalties under independently and identically distributed (i.i.d.) and independently and non-identically (i.ni.d.) error assumptions. They proved the oracle properties of SCAD and adaptive-LASSO penalized quantile regression. \cite{wei2012multiple} proposed a multiple imputation estimator for parameter estimation in a quantile regression model when some covariates are missing at random. \cite{yoon2013penalized} studied a variable selection problem in penalized regression models with autoregressive error terms. They proposed a computational algorithm that enables us to select a relevant set of variables and also the order of autoregressive error terms simultaneously and compared performances of adaptive LASSO (Least Absolute Shrinkage and Selection Operator), bridge, and SCAD (Smoothly Clipped Absolute Deviation) estimators. \cite{ahmed2014penalty} provided a collection of topics outlining pretest and Stein-type shrinkage estimation techniques in a variety of regression modeling problems. \cite{li2015quantile} applied quantile correlation (QCOR) and quantile partial correlation (QPCOR) measures to quantile autoregressive (QAR) models to extend the classical Box-Jenkins approach to quantiles autoregressive models. They introduced the quantile autocorrelation function (QACF) and the quantile partial autocorrelation function (QPACF). Moreover, they showed the usefulness of the proposed methods on the large sample results of the QAR estimates and the quantile version of the Ljung-Box test. \cite{schumacheracensored} proposed a stochastic approximation of the EM (SAEM) algorithm which permits easy and fast estimation of the parameters of autoregressive models when censoring is present and as a byproduct, enables predictions of unobservable values of the response variable. They also provide an implementation via the R package \textit{ARCensReg}. R and the package \textit{ARCensReg} are open-source software projects and can be freely downloaded from CRAN: \url{https://cran.r-project.org/web/packages/ARCensReg/index.html}. The books by \cite{koenker2005quantile} and \cite{davino2013quantile} are an excellent source for various properties of Quantile Regression as well as many computer algorithms. In this case, some biased estimations, such as shrinkage estimation, principal components estimation (PCE), ridge estimation \cite{hoerl1970ridge} 
were proposed to improve the
least square estimation (LSE). To combat multicollinearity, \cite{yuzbasi2016shrinkage,yuzbasi2017improved} proposed the pretest and Stein-type ridge regression estimators for linear and partially linear models. \cite{yuzbasi2017pretest} considered preliminary test and shrinkage estimation strategies for quantile regression models. \cite{yuzbasi2017improving} applied a quantile regression approach is used to model the respiratory mortality using the mentioned explanatory variables. Moreover, improved estimation techniques such as preliminary testing and shrinkage strategies are also obtained when the errors are autoregressive. 

The novelty of this paper apart from the above studies is considered preliminary test and shrinkage estimation strategies for quantile regression models when the errors are independently and non-identically distributed, and the organization of this paper as follows: the full and sub-model estimators are given in Section~\ref{sec2}. Moreover, the preliminary test quantile estimator, shrinkage quantile estimators and the positive part of the shrinkage estimator are proposed together with a brief definition of the penalized estimations in this section. The asymptotic properties of the pretest and shrinkage estimators estimators are obtained in Section~\ref{sec3}. The design and the results of a Monte Carlo simulation study including a comparison with other penalty estimators are given in Section~\ref{sec4}. A real data example is given for illustrative purposes in Section~\ref{sec5}.  The concluding remarks are presented in Section~\ref{sec6}.

\section{Estimation Strategies}\label{sec2}
Linear regression model given in $\eqref{lin.mod}$ would be written in a partitioned form as follows
\begin{equation}
y_i=\bx_{1i}'\bbeta_1+\bx_{2i}'\bbeta_2+\varepsilon _{i},\ \ \
i=1,2,\dots,n,  \label{part.lin}
\end{equation}
where $p=p_1+p_2$, $\bbeta_1$ and $\bbeta_2$ are parameters of $p_1$ and $p_2$ respectively. $\bx_i=\left(\bx_{1i}',\bx_{2i}'\right)$ and $\varepsilon _{i}$ are errors with the same joint distribution function $\mathcal{F}$.
The conditional quantile function of response variable $y_i$  would be written as follows
\begin{equation}\label{eq:x_tau}
\mathcal{Q}_{y}(\tau\vert \bx_i)=\bx_{1i}'\bbeta_{1,\tau}+\bx_{2i}'\bbeta_{2,\tau},\ \ \ 0<\tau< 1 
\end{equation}
The main interest here is to test the hypothesis $\bH_0:\bbeta_{2,\tau}=0$. Full model quantile regression estimator is the value that minimizes the following problem $\hbbeta_\tau^{\rm FM}=\underset{\bbeta\in\Re^{p}}{\min} \sum_{i=1}^{n}\rho_\tau (y_i-\bx'_{i}\bbeta)$. Sub model quantile regression estimator is $\hbbeta_\tau^{\rm SM}=\left(\hbbeta_{1,\tau}^{SM},\mathbf{0}\right)$. Also $\hbbeta_{1,\tau}^{\rm SM}=\underset{\bbeta_1\in\Re^{p_1}}{\min} \sum_{i=1}^{n}\rho_\tau (y_i-\bx'_{1i}\bbeta_1)$.

Let $Y,Y_{2},...$ be independent random variables with distribution functions $F,F_{2},...$ and suppose that the $\tau^{th}$ conditional quantile function $\mathcal{Q}_\tau(\by) = \bx' \bbeta_\tau$
is linear in the covariate vector $\bx$. The conditional distribution functions of the $Y_{i}$'s will be written as  $\ P(Y_{i}\ < y\vert\bx_{i})=\mathcal{F}_{{Y}_{i}}(y\vert\bx_i)=\mathcal{F}_i(y)$, and so $\mathcal{Q}_\tau(\by_i)=\mathcal{F}_{{Y}_{i}}^{-1}(\tau\vert\bx_i)\equiv\xi(\tau)$. The distribution function ${\mathcal{F}_i}$ are absolutely continuous, with continuous densities $f_i(\xi)$ uniformly bounded away from $0$ and $\infty$ at the points $\xi_i(\tau)$, $i=1,2,\dots .$ We will use the following continuity conditions to discover the asymptotic behavior of the estimators:
\begin{enumerate}
\item[(i)] $\lim_{n\rightarrow\infty} \frac{1}{n}\sum_{i=1}^n\ \bx_i\bx_i'=\bD, \bD_{0}=\frac{1}{n}\bX'\bX$
\item[(ii)] $\lim_{n\rightarrow\infty} \frac{1}{n}\sum_{i=1}^n\ f_i\left(\bxi_i(\tau)\right)\bx_i\bx_i'=\bD_1$, where $\bD_0$ and $\bD_1$ are positive definite matrices.
\end{enumerate}

\begin{theorem}\label{teo_dist_FM_inid} Quantile regression model with i.ni.d errors under assumptions (i) and (ii)
\begin{equation}
\sqrt{n}\left(\hbbeta_{\tau}^{\rm FM} -\bbeta_\tau\right) \rightarrow^{\hspace{-0.3cm}D} \mathcal{N} \left(\boldsymbol{0},\tau(1-\tau) \bGamma^{-1} \right),
\text{ where } \bGamma^{-1} = \bD_1^{-1}\bD_{0}\bD_1^{-1}.
\end{equation}
\end{theorem}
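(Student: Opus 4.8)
The plan is to follow the now-standard convexity argument for quantile M-estimation (Knight's identity together with Pollard's convexity lemma). Writing $u_i = y_i - \bx_i'\bbeta_\tau$ for the residuals at the true parameter, the conditional quantile assumption gives that each $u_i$ has conditional distribution function $\mathcal{F}_i$ with $\mathcal{F}_i(0)=\tau$, i.e. its $\tau$-th quantile is $0$. Reparametrizing by $\delta = \sqrt{n}(\bbeta-\bbeta_\tau)$, the scaled estimation error $\hat\delta_n := \sqrt{n}(\hbbeta_\tau^{\rm FM}-\bbeta_\tau)$ minimizes the convex random function
\[
Z_n(\delta) = \sum_{i=1}^n\Big[\rho_\tau\big(u_i - n^{-1/2}\bx_i'\delta\big)-\rho_\tau(u_i)\Big],
\]
because subtracting the $\delta$-free terms $\rho_\tau(u_i)$ leaves the argmin unchanged. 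I would then show that $Z_n$ converges finite-dimensionally to a convex quadratic whose unique minimizer has the claimed Gaussian law, and transfer this to $\hat\delta_n$ via convexity.

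The key tool is Knight's identity
\[
\rho_\tau(u-v)-\rho_\tau(u) = -v\,\psi_\tau(u) + \int_0^{v}\big(I(u\le s)-I(u\le 0)\big)\,ds,\qquad \psi_\tau(u)=\tau-I(u<0),
\]
applied with $v=n^{-1/2}\bx_i'\delta$; this splits $Z_n(\delta)=-\bW_n'\delta+R_n(\delta)$, with linear part $\bW_n=n^{-1/2}\sum_{i=1}^n\bx_i\,\psi_\tau(u_i)$ and remainder $R_n(\delta)=\sum_{i=1}^n\int_0^{n^{-1/2}\bx_i'\delta}\big(I(u_i\le s)-I(u_i\le 0)\big)\,ds$. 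For the linear part, the summands $\bx_i\psi_\tau(u_i)$ are independent with mean $\mathbf 0$ (since $\mathbb{E}\psi_\tau(u_i)=\tau-\mathcal{F}_i(0)=0$ by continuity) and $\Var(\psi_\tau(u_i))=\tau(1-\tau)$, so $\Var(\bW_n)=\tau(1-\tau)\,n^{-1}\sum_i\bx_i\bx_i'\to\tau(1-\tau)\bD_0$ by assumption (i), and a Lindeberg--Feller central limit theorem yields $\bW_n\stackrel{D}{\longrightarrow}\bW\sim\mathcal N(\mathbf 0,\tau(1-\tau)\bD_0)$.

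For the remainder I would compute its mean, $\mathbb{E}[R_n(\delta)]=\sum_i\int_0^{n^{-1/2}\bx_i'\delta}(\mathcal{F}_i(s)-\mathcal{F}_i(0))\,ds$, and insert the one-term expansion $\mathcal{F}_i(s)-\mathcal{F}_i(0)=f_i(\xi_i(\tau))\,s+o(s)$, which is legitimate because the densities $f_i$ are continuous and bounded away from $0$ and $\infty$ near $\xi_i(\tau)$; integrating gives $\mathbb{E}[R_n(\delta)]\to\tfrac12\,\delta'\bD_1\delta$ by assumption (ii). A direct second-moment bound then shows $\Var(R_n(\delta))\to0$, so that $R_n(\delta)\stackrel{p}{\longrightarrow}\tfrac12\,\delta'\bD_1\delta$ for each fixed $\delta$.

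Combining the pieces, $Z_n(\delta)\stackrel{D}{\longrightarrow}Z_0(\delta):=-\bW'\delta+\tfrac12\,\delta'\bD_1\delta$ in the sense of finite-dimensional distributions, and $Z_0$ is a strictly convex quadratic (as $\bD_1$ is positive definite) with unique minimizer $\bD_1^{-1}\bW$. Since every $Z_n$ is convex, Pollard's convexity lemma (argmin continuity for convex stochastic processes) transfers convergence of the objective to convergence of the minimizer, giving $\hat\delta_n\stackrel{D}{\longrightarrow}\bD_1^{-1}\bW$. As $\bW\sim\mathcal N(\mathbf 0,\tau(1-\tau)\bD_0)$, this yields $\bD_1^{-1}\bW\sim\mathcal N(\mathbf 0,\tau(1-\tau)\,\bD_1^{-1}\bD_0\bD_1^{-1})=\mathcal N(\mathbf 0,\tau(1-\tau)\bGamma^{-1})$, which is the assertion. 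The main obstacle is the remainder $R_n(\delta)$: because the $f_i(\xi_i(\tau))$ vary with $i$, establishing $\mathbb{E}[R_n(\delta)]\to\tfrac12\delta'\bD_1\delta$ and $\Var(R_n(\delta))\to0$ must lean on the uniform density bounds rather than on a single common density, and one must also confirm that pointwise (finite-dimensional) convergence of the convex $Z_n$ is enough before invoking the convexity lemma.
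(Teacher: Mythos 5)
Your proposal is correct and is in substance the same argument the paper relies on: the paper's ``proof'' is a one-line citation to Koenker (2005), whose theorem on the i.ni.d.\ case is established by precisely this route --- Knight's identity splitting the recentred convex objective into the linear term $-\bW_n'\delta$ and the remainder $R_n(\delta)$, a Lindeberg--Feller CLT for $\bW_n$, convergence of $R_n(\delta)$ to $\tfrac12\delta'\bD_1\delta$, and Pollard's convexity lemma to pass from pointwise convergence of the convex objectives to convergence of their minimizers. The only caveat, which you partly flag yourself, is that both the Lindeberg condition and the uniform control of the remainder (making $\max_i |n^{-1/2}\bx_i'\delta|\to 0$ so the one-term expansion of $\mathcal{F}_i$ applies uniformly in $i$) require the additional design condition $\max_{i\le n}\|\bx_i\|/\sqrt{n}\rightarrow 0$, which Koenker assumes explicitly but the paper's stated conditions (i)--(ii) omit.
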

\begin{proof}
The proof is given by \cite{koenker2005quantile}.
\end{proof}

$\bGamma$ is partitioned in blocks as $\boldsymbol{\Gamma}=
\left(
\begin{array}{cc}
\bGamma_{11}&\bGamma_{12}\\
\bGamma_{21}&\bGamma_{22}\\
\end{array}
\right)$  and 
the test statistic for $H_0: \bbeta_{2,\tau}=\boldsymbol{0}$ is given by
\begin{equation} \label{test statistic}
\mathcal{W}_n = \frac{n}{\tau(1-\tau)} \left(\hbbeta_{2,\tau}^{\rm FM}\right)'\bGamma_{22.1} \hbbeta_{2,\tau}^{\rm FM}
\end{equation}
$\bGamma_{22.1} = {\bGamma}_{22} - {\bGamma}_{21}{\bGamma}_{11}^{-1}{\bGamma}_{12}$.
Under the null hypothesis $H_0$, $\mathcal{W}_{n}$ has the chi-square distribution with $p_2$ degrees of freedom (d.f.).
Hence, we are ready to define pretest and shrinkage estimations as follows. The preliminary test (PT) estimator of $\btau$ is defined by
\begin{equation}
\label{beta_PT}
\PT=\FM-\left(\FM-\SM \right ) \textrm{I}\left(\mathcal{W}_{n}\leq  \chi^2_{p_2}\right),
\end{equation}
where $\textrm{I}\left(A\right)$ is the indicator function of the set $A$. The shrinkage quantile regression estimator $\SS$ of $\btau$ is proposed as
\begin{equation}
\label{beta_S}
\SS =\SM +\left(\FM -\SM \right ) (1-(p_2-{2})\mathcal{W}_{n}^{-1}), p_{2}\ge 3.
\end{equation}
The positive part of the shrinkage estimator $\PS$ of $\btau$ is also proposed as
\begin{equation}
\label{beta_PS}
\PS = \SM +\left(\FM -\SM \right ) (1-(p_2-{2})\mathcal{W}_{n}^{-1})^+.
\end{equation}

\subsection{Penalized Estimation}
In \cite{yi2017semismooth}, the penalized estimators for quantile are given by
\begin{equation}
\bm{\widehat{\beta}}=\underset{\bm{\bm{\beta}}}{\arg \min}\sum_{i}\rho(y_i-\bx_{i}'\bbeta)+\lambda\ P(\bbeta),
\end{equation}
where $\rho$ is a quantile loss function, $P$ is a penalty function and $\lambda$ is a tuning parameter.  
\begin{equation*}
P\left(\bbeta\right)\equiv P_{\alpha}(\bbeta)=\alpha\Vert\bbeta\Vert_1+\frac{(1-\alpha)}{2}\Vert\bbeta\Vert_2^2
\end{equation*}
which is the lasso penalty for $\alpha = 1$ \cite{tibshirani1996regression}, the ridge penalty for $\alpha=0$ \cite{hoerl1970ridge} and the elastic-net penalty for $0\le \alpha \le {1}$ \cite{zou2005regularization}. 

\section{Asymptotics}\label{sec3}
Consider a sequence of local alternatives $\left\{K_n\right\}$ given by
\begin{equation*}
\ K_n:\bbeta_{2,\tau}=\frac{\bgamma}{\sqrt{n}}
\end{equation*}
where $\bgamma=\left(\gamma_{1},\gamma_{2},\dots,\gamma_{p_{2}}\right)'\in \Re^{p_{2}}$ is a fixed vector. If $\bgamma=\boldsymbol{0}_{p_2}$, then the null hypothesis is true. Moreover, we consider the following proposition to establish the asymptotic properties of the estimators.
\begin{proposition}\label{prop_vector_dist} 
Let $\bvt _{1} = \sqrt{n}\left(\hbbeta_{1, \tau}^{\rm FM}-%
\btau\right)$, $\bvt _{2} =\sqrt{n}\left( \SM-%
\btau \right)$ and $\bvt _{3} =\sqrt{n}\left(\FM-\SM\right)$. Under the regularity assumptions A1 and A2, Theorem~\ref{teo_dist_FM_inid} and the local alternatives $\left\{ K_{n}\right\}$, as $n\rightarrow \infty$ we have the joint distributions are given as follows:
$$\left(
\begin{array}{c}
\bvt _{1} \\
\bvt _{3}%
\end{array}%
\right) \sim\mathcal{N}\left[ \left(
\begin{array}{c}
\boldsymbol{0 }_{p_1} \\
-\bdelta%
\end{array}%
\right) ,\left(
\begin{array}{cc}
\tau(1-\tau)\bGamma_{11.2}^{-1} & \bSigma_{12} \\
\bSigma_{21} & \bPhi%
\end{array}%
\right) \right]$$

$$\left(
\begin{array}{c}
\bvt _{3} \\
\bvt _{2}%
\end{array}%
\right) \sim\mathcal{N}\left[ \left(
\begin{array}{c}
-\bdelta \\
\bdelta%
\end{array}%
\right) ,\left(
\begin{array}{cc}
\bPhi & \bSigma^* \\
\bSigma^* & \tau(1-\tau)\bGamma_{11}^{-1}%
\end{array}%
\right) \right]$$ \\
where $\bdelta= \bGamma_{11}^{-1}\bGamma_{12}\bgamma$,  $\bPhi=\tau(1-\tau)\bGamma_{11}^{-1}\bGamma_{12}\bGamma_{22.1}^{-1}\bGamma_{21}\bGamma_{11}^{-1}$, $\bSigma_{12}=-\tau(1-\tau)\bGamma_{12}\bGamma_{21}\bGamma_{11}^{-1}$, $\bSigma^*=\tau(1-\tau)\left(\bGamma_{11.2}^{-1}+\bGamma_{12}\bGamma_{21}\bGamma_{11}^{-1} - \bGamma_{11}\right)$ and $\bGamma_{11.2} = \bGamma_{11} - \bGamma_{12}\bGamma_{22}^{-1}\bGamma_{21}$.
\end{proposition}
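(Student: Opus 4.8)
The plan is to derive both joint normal distributions from the single asymptotic result of Theorem~\ref{teo_dist_FM_inid}, exploiting the fact that all three quantities $\bvt_1,\bvt_2,\bvt_3$ are (asymptotically) linear functions of the same underlying Gaussian vector $\sqrt{n}(\hbbeta_\tau^{\rm FM}-\bbeta_\tau)$. First I would establish the marginal behavior of the full-model pieces. Partitioning $\hbbeta_\tau^{\rm FM}=(\hbbeta_{1,\tau}^{\rm FM},\hbbeta_{2,\tau}^{\rm FM})$ and writing $\bGamma^{-1}$ in the corresponding $2\times 2$ block form, Theorem~\ref{teo_dist_FM_inid} gives that $\sqrt{n}(\hbbeta_{1,\tau}^{\rm FM}-\btau)$ and $\sqrt{n}(\hbbeta_{2,\tau}^{\rm FM}-\bbeta_{2,\tau})$ are jointly centered normal with covariance blocks read off from $\tau(1-\tau)\bGamma^{-1}$; in particular the $(1,1)$ block is $\tau(1-\tau)\bGamma_{11.2}^{-1}$ by the standard block-inverse formula, which explains the variance of $\bvt_1$.

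Next I would handle the sub-model and difference terms. Under the local alternatives $K_n:\bbeta_{2,\tau}=\bgamma/\sqrt{n}$, the sub-model estimator $\hbbeta_{1,\tau}^{\rm SM}$ omits $\bx_{2i}$, so I would expand its first-order (Bahadur-type) representation and compare it to the full-model representation. The key algebraic identity is that the asymptotic bias of the sub-model estimator equals $\bdelta=\bGamma_{11}^{-1}\bGamma_{12}\bgamma$, arising because the omitted regressor contributes $\bGamma_{12}\bbeta_{2,\tau}=\bGamma_{12}\bgamma/\sqrt{n}$ which, after scaling by $\sqrt{n}$ and premultiplying by $\bGamma_{11}^{-1}$, yields the stated mean $+\bdelta$ for $\bvt_2$ and correspondingly $-\bdelta$ for $\bvt_3=\bvt_1-\bvt_2$. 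Writing each estimator in its linear representation as a functional of the common influence terms, I would then read off all covariance blocks by direct computation of cross-covariances: $\bPhi$ as the variance of $\bvt_3$, $\bSigma_{12}$ as the covariance of $\bvt_1$ with $\bvt_3$, and $\bSigma^*$ as the covariance of $\bvt_3$ with $\bvt_2$, each reducing to manipulations of the $\bGamma$-blocks.

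Concretely, since $\bvt_1=\bvt_2+\bvt_3$ (with $\bvt_3=\sqrt{n}(\hbbeta_{1,\tau}^{\rm FM}-\hbbeta_{1,\tau}^{\rm SM})$), I would compute $\Var(\bvt_3)$, $\Cov(\bvt_1,\bvt_3)$ and $\Cov(\bvt_3,\bvt_2)$ using bilinearity together with the known marginal variances $\Var(\bvt_1)=\tau(1-\tau)\bGamma_{11.2}^{-1}$ and $\Var(\bvt_2)=\tau(1-\tau)\bGamma_{11}^{-1}$. The identity $\Cov(\bvt_2,\bvt_3)=\Cov(\bvt_2,\bvt_1)-\Var(\bvt_2)$ together with an orthogonality-type relation between the sub-model estimator and the correction term lets me verify that the off-diagonal entries collapse to the claimed $\bSigma^*$ and $\bPhi$. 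Joint normality of each pair then follows because every component is an asymptotically linear functional of the same Gaussian limit, so any linear combination is asymptotically normal by the Cram\'er--Wold device.

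The main obstacle will be obtaining the correct first-order (Bahadur) representation of $\hbbeta_{1,\tau}^{\rm SM}$ under the i.ni.d.\ errors and the local alternatives, and in particular tracking how the omitted-variable bias $\bdelta$ propagates through the score equations; once the linear representations of all three statistics are pinned down in terms of a shared Gaussian term, the remaining covariance identities are routine block-matrix algebra. I would therefore invest the most care in justifying the asymptotic linearization step and in confirming the block-inverse identities (notably $\bGamma_{11.2}^{-1}$ and the expression for $\bSigma^*$), since sign and block-placement errors there are the likeliest source of mistakes.
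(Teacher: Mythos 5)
Your proposal is correct and follows essentially the same route as the paper: the paper likewise reduces all three statistics to linear functions of the full-model Gaussian limit via the asymptotic identity $\SM=\FM+\bGamma_{11}^{-1}\bGamma_{12}\hbbeta_{2,\tau}^{\rm FM}$, reads the mean $\bdelta=\bGamma_{11}^{-1}\bGamma_{12}\bgamma$ off the local alternatives $\{K_n\}$, and obtains $\bPhi$, $\bSigma_{12}$ and $\bSigma^*$ by exactly the bilinear covariance manipulations you describe. The only difference is one of emphasis: where you would justify the linearization by deriving a Bahadur-type representation of $\SM$ under i.ni.d.\ errors, the paper simply asserts the identity above and proceeds directly to the moment computations.
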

\begin{proof}
See Appendix.
\end{proof}
\subsection{The performance of Bias} 
The asymptotic bias of an estimator $\hbbeta%
_{1}^{\ast }$ is defined as
\begin{eqnarray*}
\mathcal{B}\left( \hbbeta_{1,\tau}^{\ast }\right) =\mathbb{E} \underset{%
n\rightarrow \infty }{\lim }\left\{\sqrt{n}\left( \hbbeta_{1,\tau}^{\ast }-
\bbeta_{1,\tau}\right) \right\}.
\end{eqnarray*}
Hence we can give the following theorem.
\begin{theorem}
\label{ADB-Low-Linear}
Under the assumed regularity conditions in (i) and (ii), the Proposition \ref{prop_vector_dist},  the Theorem~\ref{teo_dist_FM_inid}  and $\left\{K_n\right\}$, the expressions for asymptotic biases for listed estimators are:
\begin{eqnarray*}
\mathcal{B}\left(\FM \right) &=&
\boldsymbol{0} \cr \\
\mathcal{B}\left( \SM \right) &=&
\bdelta\\
\mathcal{B}\left( \PT\right) &=&\bdelta H_{p_{2}+2}\left( \chi _{p_{2},\alpha
}^{2};\Delta \right) ,\text{ } \cr\\
\mathcal{B}\left( \SS\right) &=&(p_2-2)\bdelta \mathbb{E}\left\{\chi _{p_{2}+2}^{-2}\left(\Delta\right)\right\}
\text{ } \cr\\
\mathcal{B}\left( \PS \right) &=&\bdelta H_{p_{2}+2}\left( p_2-2;\Delta \right)+(p_{2}-2)\bdelta \mathbb{E}\left\{ \chi _{p_{2}+2 }^{-2}\left(
\Delta \right) \textrm{I}\left( \chi _{p_{2}+2 }^{2}\left( \Delta \right)
>p_{2}-2\right) \right\} 
\end{eqnarray*} 
where ${H}_{v}\left( x,\Delta \right)$ is the cumulative distribution
function of the non-central chi-squared distribution with non-centrality
parameter $\Delta =\frac{\bdelta'\bGamma_{22.1}\bdelta}{\tau(1-\tau)}$ and $v$ degree of freedom, and
\begin{equation*}
\mathbb{E}\left( \chi _{v}^{-2j}\left( \Delta \right) \right)
=\int\nolimits_{0}^{\infty }x^{-2j}d\mathbb{H}_{v}\left( x,\Delta \right) .
\end{equation*}
\end{theorem}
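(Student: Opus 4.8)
The plan is to write the $\sqrt{n}$-rescaled deviation of each estimator from $\btau$ as a linear combination of the limiting vectors $\bvt_{1}$ and $\bvt_{3}$ of Proposition~\ref{prop_vector_dist}, and then to evaluate its expectation using the joint asymptotic normality supplied by that proposition together with a Stein-type identity for quadratic forms. The two non-adaptive estimators require no work beyond reading off means: since $\bvt_{1}=\sqrt{n}(\FM-\btau)$ has limiting mean $\boldsymbol{0}$ and $\bvt_{2}=\sqrt{n}(\SM-\btau)$ has limiting mean $\bdelta$, the definition of $\mathcal{B}(\cdot)$ gives $\mathcal{B}(\FM)=\boldsymbol{0}$ and $\mathcal{B}(\SM)=\bdelta$ at once.

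Next I would treat the adaptive estimators. Using the defining relations \eqref{beta_PT}, \eqref{beta_S} and \eqref{beta_PS} together with $\FM-\SM=n^{-1/2}\bvt_{3}$, the scaled deviations become
\begin{align*}
\sqrt{n}\bigl(\PT-\btau\bigr) &= \bvt_{1}-\bvt_{3}\,\textrm{I}\bigl(\mathcal{W}_{n}\le\chi^{2}_{p_{2},\alpha}\bigr),\\
\sqrt{n}\bigl(\SS-\btau\bigr) &= \bvt_{1}-(p_{2}-2)\,\bvt_{3}\,\mathcal{W}_{n}^{-1},\\
\sqrt{n}\bigl(\PS-\btau\bigr) &= \sqrt{n}\bigl(\SS-\btau\bigr)-\bvt_{3}\bigl(1-(p_{2}-2)\mathcal{W}_{n}^{-1}\bigr)\textrm{I}\bigl(\mathcal{W}_{n}<p_{2}-2\bigr),
\end{align*}
where the last line uses the elementary decomposition $(x)^{+}=x-x\,\textrm{I}(x<0)$. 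Because $\mathbb{E}[\bvt_{1}]\to\boldsymbol{0}$, every bias is controlled by expectations of $\bvt_{3}$ weighted by a measurable scalar function of $\mathcal{W}_{n}$.

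The central device is the following evaluation. Proposition~\ref{prop_vector_dist} implies the Bahadur-type linearization $\bvt_{3}=-\bGamma_{11}^{-1}\bGamma_{12}\sqrt{n}\,\hbbeta_{2,\tau}^{\rm FM}+o_{p}(1)$, while $\mathcal{W}_{n}$ is the corresponding quadratic form and converges to $\chi^{2}_{p_{2}}(\Delta)$ with $\Delta=\bdelta'\bGamma_{22.1}\bdelta/\{\tau(1-\tau)\}$. Standardizing $\sqrt{n}\,\hbbeta_{2,\tau}^{\rm FM}$ and applying the classical Stein-type identity for a normal vector $V\sim\mathcal{N}(\eta,I_{p_{2}})$, namely $\mathbb{E}[V\,h(V'V)]=\eta\,\mathbb{E}[h(\chi^{2}_{p_{2}+2}(\|\eta\|^{2}))]$ (see \cite{ahmed2014penalty,yuzbasi2017pretest}), yields
\begin{equation*}
\mathbb{E}\bigl[\bvt_{3}\,g(\mathcal{W}_{n})\bigr]=-\bdelta\,\mathbb{E}\bigl[g\bigl(\chi^{2}_{p_{2}+2}(\Delta)\bigr)\bigr]
\end{equation*}
for a wide class of functions $g$; the two extra degrees of freedom and the factor $-\bdelta$ are exactly what produce the stated forms. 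Taking $g(w)=\textrm{I}(w\le\chi^{2}_{p_{2},\alpha})$ gives $\mathcal{B}(\PT)=\bdelta H_{p_{2}+2}(\chi^{2}_{p_{2},\alpha};\Delta)$; taking $g(w)=(p_{2}-2)w^{-1}$ gives $\mathcal{B}(\SS)=(p_{2}-2)\bdelta\,\mathbb{E}[\chi^{-2}_{p_{2}+2}(\Delta)]$; and applying both weights on the event $\{\mathcal{W}_{n}<p_{2}-2\}$ and then splitting $\mathbb{E}[\chi^{-2}_{p_{2}+2}(\Delta)]$ over $\{\,\cdot<p_{2}-2\}$ and $\{\,\cdot>p_{2}-2\}$ produces, after cancellation, the two-term expression for $\mathcal{B}(\PS)$.

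I expect the main obstacle to be the rigorous justification of the Stein-type identity in this quantile context: one must verify that $(\bvt_{3},\sqrt{n}\,\hbbeta_{2,\tau}^{\rm FM})$ are asymptotically jointly Gaussian with precisely the linear dependence encoded in the covariance block $\bPhi$ of Proposition~\ref{prop_vector_dist}, so that conditioning on the quadratic form and integrating by parts is valid, and that sufficient uniform integrability holds to interchange the limit and the expectation in the definition of $\mathcal{B}(\cdot)$ for each weight $g$. Once this identity and the interchange are secured, the remaining manipulations for $\PT$, $\SS$ and $\PS$ are purely algebraic.
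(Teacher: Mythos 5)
Your proposal is correct and follows essentially the same route as the paper: the same decompositions of $\PT$, $\SS$ and $\PS$ into functions of $\bvt_1$ and $\bvt_3$, followed by the identity $\mathbb{E}\left[\bvt_3\, g(\mathcal{W}_n)\right]=-\bdelta\,\mathbb{E}\left[g\left(\chi^2_{p_2+2}(\Delta)\right)\right]$, which is exactly the paper's Judge--Bock Lemma~\ref{lem_JB} applied after linearizing $\bvt_3$ in $\sqrt{n}\,\hbbeta_{2,\tau}^{\rm FM}$. The only cosmetic difference is that you write $\PS$ as $\SS$ minus a correction on $\left\{\mathcal{W}_n<p_2-2\right\}$ and then split $\mathbb{E}\left[\chi^{-2}_{p_2+2}(\Delta)\right]$ over the two complementary events, whereas the paper expands the positive-part factor into its indicator form directly; the two computations are algebraically identical.
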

\begin{proof}
See Appendix.
\end{proof}
Now, we define the following asymptotic quadratic bias $\left(\mathcal{QB}\right)$ of an estimator $\hbbeta_{1,\tau}^*$
by converting them into the quadratic form since the bias expression of all the estimators are not in the scalar form.
\begin{equation}\label{eq:asqubi}
\mathcal{QB}\left(\hbbeta_{1,\tau}^*\right)=\mathcal{B}\left(\hbbeta_{1,\tau}^*\right)' \bGamma_{11.2}\mathcal{B}\left(\hbbeta_{1,\tau}^*\right).
\end{equation} 
Using the definition in $\eqref{eq:asqubi}$, the asymptotic distributional quadratic bias of the
estimators are presented below.
\begin{eqnarray*}
\mathcal{QB}\left(\FM \right) &=&
\boldsymbol{0}\cr \\
\mathcal{QB}\left( \SM \right) &=&
\bdelta'\bGamma_{11.2}\bdelta \cr \\
\mathcal{QB}\left( \PT \right) &=& \bdelta'\bGamma_{11.2}\bdelta \left[H_{p_{2}+2}\left( \chi _{p_{2},\alpha
}^{2};\Delta \right)\right]^2 ,\text{ } \cr \\
\mathcal{QB}\left( \SS\right) &=& (p_{2}-2)^2\bdelta'\bGamma_{11.2}\bdelta \left[\mathbb{E}\left( \chi _{p_{2}+2
}^{-2}\left( \Delta\right)\right)\right]^2, \cr \\
\mathcal{QB}\left( \PS \right) &=&\bdelta'\bGamma_{11.2}\bdelta \Big[ \mathbb{H}_{p_{2}+2}\left(p_{2}-2 ;\Delta\right) +(p_{2}-2)\mathbb{E}\left\{ \chi _{p_{2}+2 }^{-2}\left(\Delta \right) \textrm{I}\left( \chi _{p_{2}+2 }^{2}\left( \Delta \right)>p_{2}-2\right) \right\}\Big]^2.
\end{eqnarray*}%

The quadratic bias of the $\FM$, i.e, $\FM$ is a unbiased estimator. On the other hand, if the null hypothesis is true, all others are unbiased. When $\Delta>0$, the quadratic bias of $\SM$ is unbounded function of $\Delta$ while all the remaining estimators are bounded. Since the quadratic bias of $\PT$ is a function of $\Delta$, it starts from zero, increases to a point, then decreases gradually to zero. The quadratic bias functions of $\S$ and $\PS$ starts from zero when the null hypothesis is true, and increases to a point and then decreases toward to zero because $E(\chi^{-2}_{p_2+2}\left( \Delta \right))$ is decreasing convex function of $\Delta$. Note that the graph of the quadratic bias of $\PS$ remain below the graph of the quadratic bias of $\SS$.
\subsection{The performance of Risk}
The asymptotic distributional risk of an estimator $\hbbeta_{1, \tau}^{\ast }$ is defined as
\begin{equation}
\mathcal{R}\left( \hbbeta_{1,\tau}^{\ast} \right) = {\rm tr}\left(  \Ups\left(\hbbeta_{1,\tau}^{\ast}\right)\boldsymbol{W} \right) 
\label{risk_def}
\end{equation}
where $\boldsymbol{W}$ is a positive definite matrix of weights with dimensions of $p_1 \times p_1$, and $\Ups\left(\hbbeta_{1,\tau}^{\ast}\right)$ is the asymptotic covariance matrix of an estimator $\hbbeta_{1, \tau}^{\ast }$ is defined as
\begin{equation*}
\Ups \left( \hbbeta_{1,\tau}^{\ast} \right) = \mathbb{E}\left\{\underset{n\rightarrow \infty }{\lim }{n}\left( \hbbeta_{1,\tau}^{\ast} -\bbeta_{1,\tau}\right)\left( \hbbeta_{1,\tau}^{\ast}-\bbeta_{1,\tau}\right) ^{'}\right\}.
\end{equation*}
Based on the computations regarding the asymptotic covariances, we present the risks of the estimators $\FM$, $\SM$, $\PT$, $\SS$ and $\PS$ respectively in the following theorem.
\begin{theorem} Under the local alternatives $\left\{ K_{n}\right\} $ and assuming the regularity conditions (i) and (ii), the risks of the estimators are:
\label{risk}
\begin{eqnarray*}
\mathcal{R}\left( \FM\right)&=&
\tau(1-\tau){\rm tr}\left(\bW \bGamma_{11.2}^{-1}\right)\\
\mathcal{R}\left( \SM \right)&=& \tau(1-\tau) {\rm tr}\left(\bW \bGamma_{11}^{-1}\right)+\bdelta'\bW \bdelta \\
\mathcal{R}\left( \PT \right) 
&=& \tau(1-\tau){\rm tr}\left( \bW\bGamma_{11.2}^{-1}\right)-2{\rm tr}\left( \bW\bSigma_{21} \right)\mathbb{H}_{p_{2}+2}\left(
\chi _{p_{2},\alpha }^{2};\Delta \right)\no\\
&&+{\rm tr}\left(\bW \bdelta\bdelta'\bPhi^{-1}\bSigma_{21} \right)\left[\mathbb{H}_{p_{2}+2}\left( \chi_{p_{2}}^{2};\Delta \right)-2 \mathbb{H}_{p_{2}+4}\left( \chi _{p_{2},\alpha
}^{2};\Delta \right)\right]\no\\
&&+{\rm tr}\left( \bW\bPhi \right)\mathbb{H}_{p_{2}+2}\left(
\chi _{p_{2},\alpha }^{2};\Delta \right)+ \bdelta'\bW \bdelta\mathbb{H}_{p_{2}+4}\left( \chi _{p_{2},\alpha
}^{2};\Delta \right) \\    
\mathcal{R}\left( \SS\right) &=&
\tau(1-\tau){\rm tr}\left(\bW\bGamma_{11.2}^{-1} \right) -2(p_{2}-2){\rm tr}\left( \bW \bSigma_{21}\right)\left\{\chi _{p_{2}+2}^{-2}\left(\Delta\right)\right\}\no\\
&&-2(p_{2}-2){\rm tr}\left( \bW \bdelta \bdelta'\bPhi^{-1}\bSigma_{21}\right)\left[ \mathbb{E}\left\{\chi _{p_{2}+4}^{-2}\left(\Delta\right)\right\}+\mathbb{E}\left\{\chi _{p_{2}+2}^{-2}\left(\Delta\right)\right\}\right]\no \\
&&+(p_{2}-2)^2\left({\rm tr}\left( \bW\bPhi \right) \mathbb{E}\left\{\chi _{p_{2}+2}^{-4}\left(\Delta\right) \right\}+\bdelta'\bW\bdelta\mathbb{E}\left\{\chi _{p_{2}+4}^{-4}\left(\Delta\right) \right\}\right)\\
\end{eqnarray*}
\begin{eqnarray*}
\mathcal{R}\left( \PS \right) &=& \mathcal{R}\left( \SS\right)-2{\rm tr}\left(\bW\bSigma_{21}\right)\mathbb{E}\left\{\left(1-(p_{2}-2)\chi _{p_{2}+2}^{-2}\left(\Delta\right)\right)\textrm{I}\left(\chi _{p_{2}+4}^{2}\left(\Delta\right)\leq p_{2}-2\right)\right\}\no\\
&&-2{\rm tr}\left(\bW\bdelta\bdelta^{'}\bPhi^{-1}\bSigma_{21}\right)\mathbb{E}\left\{1-(p_{2}-2)\chi _{p_{2}+4}^{-2}\left(\Delta\right)\textrm{I}\left(\chi _{p_{2}+4}^{2}\left(\Delta\right)\leq p_{2}-2\right)\right\}\no \\
&&-2{\rm tr}\left(\bW\bdelta\bdelta^{'}\bPhi^{-1}\bSigma_{21}\right)\mathbb{E}\left\{1-(p_{2}-2)\chi _{p_{2}+2}^{-2}\left(\Delta\right)\textrm{I}\left(\chi _{p_{2}+2}^{2}\left(\Delta\right)\leq p_{2}-2\right)\right\}\no \\
&&-(p_{2}-2)^2\left[{\rm tr}\left(\bW\bPhi \right) +\bdelta'\bW\bdelta\right]\mathbb{E}\left\{\chi _{p_{2}+2}^{-4}\left(\Delta \right)\textrm{I}\left(\chi _{p_{2}+2}^{2}\left(\Delta\right)\leq p_{2}-2 \right)\right\}\no\\
&&+{\rm tr}\left(\bW\bPhi\right)\mathbb{H}_{p_{2}+2}\left(
p_{2}-2;\Delta \right)+\bdelta'\bW\bdelta\mathbb{H}_{p_{2}+4}\left(
p_{2}-2;\Delta \right)
\end{eqnarray*}
\end{theorem}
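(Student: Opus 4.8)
The plan is to reduce every risk to an expectation taken under the joint Gaussian limit supplied by Proposition~\ref{prop_vector_dist}, exploiting the fact that each of the five estimators, once centered at $\btau$ and scaled by $\sqrt{n}$, is an explicit function of the two limiting vectors $\bvt_1$ and $\bvt_3$. Concretely, $\sqrt{n}(\FM-\btau)=\bvt_1$ and $\sqrt{n}(\SM-\btau)=\bvt_1-\bvt_3$, so the risks of $\FM$ and $\SM$ follow immediately: since $\bvt_1\sim\mathcal{N}(\bzero,\tau(1-\tau)\bGamma_{11.2}^{-1})$ we obtain $\Ups(\FM)=\tau(1-\tau)\bGamma_{11.2}^{-1}$, and since $\sqrt{n}(\SM-\btau)$ has limiting mean $\bdelta$ and covariance $\tau(1-\tau)\bGamma_{11}^{-1}$, its second-moment matrix is $\tau(1-\tau)\bGamma_{11}^{-1}+\bdelta\bdelta'$; tracing against $\bW$ yields the first two displayed lines. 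Throughout I assume the uniform integrability needed to identify $\Ups(\hbbeta_{1,\tau}^{*})=\mathbb{E}\{\lim_n n(\hbbeta_{1,\tau}^{*}-\btau)(\hbbeta_{1,\tau}^{*}-\btau)'\}$ with the second-moment matrix of the limiting law, so that convergence in distribution may be upgraded to convergence of the risk.

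For the three remaining estimators I would write each scaled deviation as $\bvt_1$ minus a scalar-weighted copy of $\bvt_3$. Indeed $\sqrt{n}(\PT-\btau)=\bvt_1-\bvt_3\,\textrm{I}(\mathcal{W}_n\le\chi^2_{p_2,\alpha})$ and $\sqrt{n}(\SS-\btau)=\bvt_1-(p_2-2)\bvt_3\mathcal{W}_n^{-1}$, while, using $(x)^{+}=x-x\,\textrm{I}(x\le 0)$, one gets $\sqrt{n}(\PS-\btau)=\sqrt{n}(\SS-\btau)-\bvt_3(1-(p_2-2)\mathcal{W}_n^{-1})\textrm{I}(\mathcal{W}_n\le p_2-2)$. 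Forming the outer product and taking the limiting expectation, each risk splits into three building blocks: $\mathbb{E}[\bvt_1\bvt_1']$, which reproduces $\tau(1-\tau)\bGamma_{11.2}^{-1}$; the cross terms $\mathbb{E}[\bvt_1\bvt_3'\,g(\mathcal{W}_n)]$; and the pure terms $\mathbb{E}[\bvt_3\bvt_3'\,g(\mathcal{W}_n)^2]$, where $g$ is the relevant scalar weight. The $\PS$ case is handled by substituting the $\SS$ expansion and collecting the extra indicator-weighted terms, which is exactly why the stated $\mathcal{R}(\PS)$ is written as $\mathcal{R}(\SS)$ plus corrections.

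The engine for evaluating these building blocks is the Judge--Bock/Stein family of identities: for $\bZ\sim\mathcal{N}(\bmu,\bI)$ and measurable $g$ one has $\mathbb{E}[\bZ\,g(\bZ'\bZ)]=\bmu\,\mathbb{E}[g(\chi^2_{p+2}(\Delta))]$ and $\mathbb{E}[\bZ\bZ'\,g(\bZ'\bZ)]=\bI\,\mathbb{E}[g(\chi^2_{p+2}(\Delta))]+\bmu\bmu'\,\mathbb{E}[g(\chi^2_{p+4}(\Delta))]$ with $\Delta=\bmu'\bmu$; these convert every expectation into the non-central quantities $\mathbb{H}_{p_2+2}$, $\mathbb{H}_{p_2+4}$ and the inverse-moment functionals $\mathbb{E}\{\chi^{-2j}_{p_2+2}(\Delta)\}$, $j=1,2$, appearing in the statement. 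To apply them I first note that, under $\{K_n\}$, $\mathcal{W}_n$ is asymptotically the quadratic form in $\bvt_3\sim\mathcal{N}(-\bdelta,\bPhi)$ that integrates to $\chi^2_{p_2}(\Delta)$ with $\Delta=\bdelta'\bGamma_{22.1}\bdelta/(\tau(1-\tau))$, which is what produces the shifted degrees of freedom $p_2+2$ and $p_2+4$. The correlated cross terms are the one place requiring care: I would remove the dependence by the conditional-regression decomposition $\mathbb{E}[\bvt_1\mid\bvt_3]=\bSigma_{12}\bPhi^{-1}(\bvt_3+\bdelta)$, so that $\mathbb{E}[\bvt_1\bvt_3'\,g(\mathcal{W}_n)]$ collapses into the pure-$\bvt_3$ expectations $\mathbb{E}[\bvt_3\bvt_3'\,g]$ and $\mathbb{E}[\bvt_3'\,g]$, to which the Stein identities then apply.

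The main obstacle I anticipate is precisely this cross-term bookkeeping together with the singularity of $\bPhi$. Because $\bPhi=\tau(1-\tau)\bGamma_{11}^{-1}\bGamma_{12}\bGamma_{22.1}^{-1}\bGamma_{21}\bGamma_{11}^{-1}$ has rank at most $p_2$ and is genuinely rank-deficient whenever $p_1>p_2$, every occurrence of $\bPhi^{-1}$ in the stated risks must be read as a generalized inverse, and the regression decomposition of $\bvt_1$ along $\bvt_3$ has to respect the range of $\bPhi$. Getting the coefficients of the $\text{tr}(\bW\bSigma_{21})$, $\text{tr}(\bW\bdelta\bdelta'\bPhi^{-1}\bSigma_{21})$ and $\text{tr}(\bW\bPhi)$ terms right, including the different degrees of freedom attached to each, is the delicate accounting step; once the three building blocks are expressed through $\mathbb{H}_{p_2+2}$, $\mathbb{H}_{p_2+4}$ and the inverse chi-squared moments, assembling the five displayed formulas is routine algebra.
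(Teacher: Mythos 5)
Your proposal follows essentially the same route as the paper's own proof: the identical decompositions $\sqrt{n}(\PT-\btau)=\bvt_1-\bvt_3\,\textrm{I}(\mathcal{W}_n\le\chi^2_{p_2,\alpha})$, $\sqrt{n}(\SS-\btau)=\bvt_1-(p_2-2)\bvt_3\mathcal{W}_n^{-1}$ and $\PS=\SS-(\FM-\SM)(1-(p_2-2)\mathcal{W}_n^{-1})\textrm{I}(\mathcal{W}_n\le p_2-2)$, the same conditioning step $\mathbb{E}[\bvt_1\mid\bvt_3]=\bSigma_{12}\bPhi^{-1}(\bvt_3+\bdelta)$ to reduce cross terms, the same Judge--Bock identities (the paper's Lemma in the Appendix), and the final trace against $\bW$ via the definition of $\mathcal{R}$. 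Your added caveats on uniform integrability and on reading $\bPhi^{-1}$ as a generalized inverse when $\bPhi$ is rank-deficient are points the paper passes over silently, but they refine rather than alter the argument.
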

\section{Simulations}\label{sec4}

We conduct Monte-Carlo simulation experiments to study the performances of the proposed estimators under various practical settings. In order to generate the response variables, we use
\begin{equation*}
y_i=\bx_i'\bbeta+\varepsilon_i,\ i=1,\dots,n,
\end{equation*}%
where $\bx_i$'s are standard normal. The correlation between the $j$th and $k$th components of $\bx$ equals to $0.5^{|j-k|}$ and also $\varepsilon_i$'s are independently and non-identically distributed. 

\subsection{Asymptotic Investigations}
 We consider the regression coefficients are set
$\bbeta=\left( \bbeta_{1}',\bbeta_{2}'\right)' =\left( \mathbf{1}'_{p_1},\mathbf{0}_{p_2}'\right)'$, where $\mathbf{1}_{p_1}$ and $\mathbf{0}_{p_2}$ mean the vectors of 1 and 0 with dimensions $p_1$ and $p_2$, respectively. In order to investigate the behavior of the estimators, we define $\Delta^{\ast}=\left\Vert \bbeta -\bbeta_{0}\right\Vert $, where $\bbeta_{0}=\left( \mathbf{1}'_{p_1},\mathbf{0}_{p_2}'\right)'$ and $\left\Vert \cdot \right\Vert $ is the Euclidean norm. 
Also, it is taken $n=60$, $p_1=p_2=5$, $\alpha=0.01,0.05,0.10,0.25$ and $\sigma=1$. Furthermore, we consider errors are taken from $0.5\mathcal{N}(0,1) + 0.5\mathcal{N}(0,100)$. The performance of an estimator $\hbbeta_{\tau}^{\ast}$ was evaluated by
using the model error (ME) criterion which is defined by
\begin{equation*}
\textnormal{ME}\left(\hbbeta_{\tau}^{\ast}\right) =\left( \hbbeta_{\tau}^{\ast}-\bbeta\right)'\left( \hbbeta_{\tau}^{\ast}-\bbeta\right)
\end{equation*}

\begin{figure}[ht]
\centering
\includegraphics[height=4cm,width=14cm]{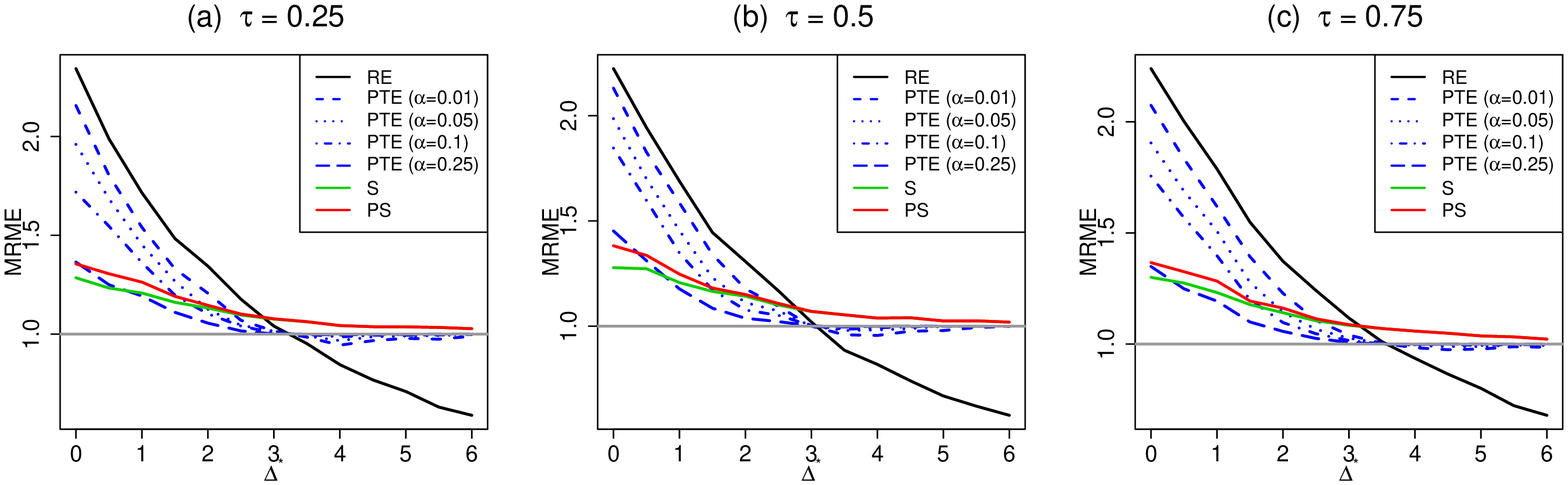}
\caption{MRME of the estimators as a function of the
$\Delta^{\ast}$}
\label{fig:delta}
\end{figure}

In Figure ~\ref{fig:delta}, we plot the median relative model error (MRME) which is defined by ${\rm MRME}\left(\hbbeta_{\tau}^{\ast}\right)=\frac{\hbbeta_{\tau}^{\rm FM}}{\hbbeta_{\tau}^{\ast}}$ versus as a function of $\Delta^{\ast}$. When $\Delta^{\ast}=0$, the performance of $\SM$ outshines all proposed estimators. On the other hand, if $\Delta^{\ast}>0$, then the performance of $\SM$ loses and goes to zero. The performance of $\PT$ is better than $\FM$, $\SS$ and $\PS$ in case of $\Delta^{\ast}=0$. However,  $\PT$ loses its efficiency for intermediate values of $\Delta^{\ast}$, even worse than $\FM$, after that it acts like $\FM$ for larger values of $\Delta^{\ast}$. Clearly, $\PS$ performs better than $\SS$ for each values of $\Delta^{\ast}$. Both shrinkage estimators outperforms $\FM$ regardless the correctness of the selected sub-model at hand.

\subsection{Performance Comparisons}
In this section, we consider $\bbeta^{\top} = (3, 1.5, 0, 0, 2, 0, 0, 0)$. Also, we simulated data which contains a training dataset, validation set and an independent test set. Note that the co-variates are scaled to have mean zero and unit variance. We fitted the models only using the training data and the tuning parameters were selected using the validation data. Finally, we computed the predictive mean absolute deviation (PMAD) criterion which is defined by
\begin{equation*}
\rm PMAD(\hbbeta_{\tau}^{\ast}) = \frac{1}{n_{test}}\sum_{i=1}^{n_{test}}\left | \by_{test}-\bX_{test}\hbbeta_{\tau}^{\ast} \right |.
\end{equation*}

\begin{table}[ht]
\centering
\begin{tabular}{rrcccc}
  \hline
&&\multicolumn{2}{c}{Case 1} 
&\multicolumn{2}{c}{Case 2} \\
  \cmidrule(lr){3-4} \cmidrule(lr){5-6}
$\tau$ & & \%10 & \%25 & \%10  & \%25\\ 
  \hline
0.25&FM & 0.335(0.012) & 0.302(0.010) & 1.668(0.033) & 1.380(0.029) \\ 
  &SM & 0.106(0.004) & 0.091(0.004) & 0.517(0.017) & 0.426(0.015) \\ 
  &PT & 0.108(0.006) & 0.094(0.006) & 0.625(0.046) & 0.515(0.038) \\ 
  &PS & 0.126(0.008) & 0.113(0.007) & 1.169(0.036) & 0.946(0.032) \\
  \cmidrule(lr){3-6}
  &Ridge & 0.247(0.007) & 0.217(0.006) & 0.716(0.008) & 0.662(0.009) \\ 
  &Lasso & 0.146(0.005) & 0.127(0.005) & 0.541(0.010) & 0.482(0.011) \\ 
  &ENET & 0.141(0.005) & 0.122(0.005) & 0.522(0.011) & 0.460(0.011) \\ 
   \hline
 0.5&  FM & 0.237(0.006) & 0.205(0.005) & 1.581(0.029) & 1.298(0.026) \\ 
  &SM & 0.071(0.003) & 0.060(0.002) & 0.479(0.016) & 0.397(0.013) \\ 
  &PT & 0.071(0.004) & 0.062(0.003) & 0.526(0.031) & 0.417(0.027) \\ 
  &PS & 0.103(0.005) & 0.091(0.004) & 1.015(0.032) & 0.788(0.027) \\ 
  \cmidrule(lr){3-6}
  &Ridge & 0.183(0.004) & 0.156(0.004) & 0.682(0.008) & 0.634(0.008) \\ 
  &Lasso & 0.096(0.004) & 0.088(0.003) & 0.521(0.010) & 0.455(0.010) \\ 
  &ENET & 0.093(0.003) & 0.086(0.003) & 0.483(0.011) & 0.425(0.010) \\ 
  \hline
0.75&  FM & 0.374(0.012) & 0.313(0.010) & 1.566(0.032) & 1.419(0.031) \\ 
  &SM & 0.104(0.005) & 0.084(0.004) & 0.486(0.016) & 0.424(0.015) \\ 
  &PT & 0.106(0.007) & 0.087(0.006) & 0.563(0.038) & 0.487(0.037) \\ 
  &PS & 0.134(0.008) & 0.101(0.007) & 1.076(0.034) & 0.889(0.033) \\ 
  \cmidrule(lr){3-6}
  &Ridge & 0.266(0.007) & 0.226(0.006) & 0.683(0.008) & 0.640(0.008) \\ 
  &Lasso & 0.151(0.006) & 0.130(0.005) & 0.521(0.011) & 0.487(0.011) \\ 
  &ENET & 0.145(0.006) & 0.123(0.005) & 0.489(0.011) & 0.450(0.010) \\ 
  \hline
Mean  &LSE& 1.180(0.075) & 0.929(0.059) & 1.231(0.078) & 1.002(0.063) \\ 
  \hline
\end{tabular}
\caption{PMAD values and their standard errors of listed estimators for  Cases 1 and 2
\label{tab:sim}}
\end{table}

We also use the notation $\cdot/\cdot/\cdot$ to describe the number of observations in the training, validation and test set respectively. Hence, we consider the each data set consists of $50/50/200$ observations and $\bX \sim N(\mathbf{0},\boldsymbol{\Sigma})$, where $\Sigma_{ij}=0.5^{|i-j|}$. Furthermore, the errors follow the one of the following distributions

\begin{itemize}
\item[Case 1:] 
$(1-\gamma)\mathcal{N}(0,1) + \gamma\left[ \frac{1}{\pi}\arctan(t)+\frac{1}{2}\right]$,
where the expression in square brackets denote the standard Cauchy distribution. The proportion $\gamma$ is often useful to verify the effect of outliers and small values of $\gamma$ lead to a contaminated normal distribution.  For example, $\gamma=0.1$ indicates 10\% outliers.

\item[Case 2:] We consider, 
$(1-\gamma)\mathcal{N}(0,1) + \gamma\mathcal{N}(0,100)$.
\end{itemize}


Table~\ref{tab:sim} represents PMAD values with standard errors in parenthesis $\gamma=0.1, 0.25$ indicate 10\% and 25\% outliers for both cases. According to these results, the PMAD of the SM estimation is the lowest since the null hypothesis is true. The PMAD of LSE is worse than the exiting methods since the errors are generated from contaminated distributions. On the other hand, the suggest methods perform better than penalty estimations in Case 1 while their performance is relative worse in Case 2. Regardless of the Cases, the proposed methods perform better than the full model estimation.


\section{Real Data Application}\label{sec5}

We implement the proposed strategies to the Hitters data which can be obtained from {\it ISLR} package of \cite{james2017package}. This data has $322$ observations of major league players on $20$ variables. 

We also omit missing values before we start to analyze. Hence, we have 263 observations. Furthermore, we apply Breusch-Pagan test ({\it bptest}) function in the {\it lmtest} package in R confirm that this data set has the problem of heteroskedasticity. In order to apply suggested methods, we first select the candidate sub model via BIC which confirms that the Hits, Walks and Years are significant covariates. Note that, one may use another sub-model selection criteria or model selection methods. Hence, we have two model which are the full model with all the predictors and the sub-model with predictors obtained by BIC. Finally, we may construct the pretest and shrinkage estimation techniques by combining the full-model and the sub-model in an optimal way.

\begin{table}[ht]
\caption{APE values for Hitters data}
\label{real:dat:results}
\centering
\begin{tabular}{rrrrr}
  \hline
 $\tau$& 0.25 & 0.5 & 0.75 \\ 
  \hline
  FM    & 4.256 & 4.232 & 4.520 \\ 
  SM    & 4.059 & 3.885 & 4.133 \\ 
  PT   & 4.180 & 4.089 & 4.486 \\ 
  PS    & 4.178 & 4.083 & 4.430 \\ 
  \cmidrule(lr){2-4}
  Ridge & 4.269 & 4.192 & 4.503 \\ 
  Lasso & 4.255 & 4.194 & 4.501 \\ 
  ENET  & 4.263 & 4.193 & 4.505 \\ 
  \hline
  & Mean \\
  \hline
  LSE & 4.587\\
   \hline
\end{tabular}
\end{table}

In the following, we divided the data into two parts randomly. One is the train, and the other is the test data. We fitted the model based on the train data. After that, we calculated the prediction errors by taking the mean absolute deviation of the observed and predicted values in the test set. In order to avoid random variation, this process is reiterated 999 times and is estimated the average prediction error (APE) that is given by 
\begin{equation*}
{\rm APE}(\hbbeta_{\tau}^{\star}) = \frac{1}{999} \sum_{k=1}^{999} \left(\frac{1}{\rm n_{test}} \sum_{i=1}^{\rm n_{test}}\left|
\by_{\rm test}-\bX_{\rm test}\hbbeta_{\tau}^{\star}
\right|\right),
\end{equation*} 
where $i$ and $k$ indicate observation and iteration, respectively.

Table \ref{real:dat:results} shows the results of Hitters data application. According to this results, the sub-model estimator has the best performance for each $\tau$ values since the candidate sub-model is selected truly. Also, the full model quantile regression estimation outshines LSE estimator, especially when $\tau=0.5$. This confirms that the Hitters data has not valid the assumptions of LSE. All suggested methods perform better than the full model estimator and penalty type quantile estimators. Furthermore, it can be concluded that the PT is less efficiency than PS.

\section{Conclusions}\label{sec6}
In this paper, we proposed preliminary test and shrinkage estimation strategies for linear quantile regression models. We established the theoretical properties of suggested estimators. Also, we conducted some Monte Carlo simulation studies and a real data application in order to investigate and compare the performance of listed estimators with some quantile type penalty estimators, namely Ridge, Lasso and Elastic Net, and LSE. According to the numerical studies, $\SM$ has the best performance if a candidate sub-model is selected true. As summary, the suggested methods perform better than LSE and penalty estimators. These results also consistent with our theory.

\section*{Appendix}
\begin{lemma} \label{lem_JB}
Let $\bX$ be $q-$dimensional normal vector distributed as $%
N\left( \boldsymbol{\mu }_{x},\boldsymbol{\Sigma }%
_{q}\right) ,$ then, for a measurable function of of $\varphi ,$ we have
\begin{align*}
\mathbb{E}\left[ \bX\varphi \left( \bX^{\top}\bX%
\right) \right] =&\boldsymbol{\mu }_{x}\mathbb{E}\left[ \varphi \chi _{q+2}^{2}\left(
\Delta \right) \right] \\
\mathbb{E}\left[ \boldsymbol{XX}^{\top}\varphi \left( \bX^{\top}%
\bX\right) \right] =&\boldsymbol{\Sigma }_{q}\mathbb{E}\left[ \varphi
\chi _{q+2}^{2}\left( \Delta \right) \right] +\boldsymbol{\mu }_{x}\boldsymbol{\mu }_{x}^{\top}\mathbb{E}\left[ \varphi \chi
_{q+4}^{2}\left( \Delta \right) \right]
\end{align*}
where $\chi_{v}^{2}\left( \Delta \right)$ is a non-central chi-square distribution with $v$ degrees of freedom and non-centrality parameter $\Delta$.
\end{lemma}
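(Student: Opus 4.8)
The plan is to reduce the statement to the case $\boldsymbol{\Sigma}_q=\bI_q$, the setting in which $\bX^\top\bX$ is genuinely a non-central chi-square variate $\chi^2_q(\Delta)$ with $\Delta=\boldsymbol{\mu}_x^\top\boldsymbol{\mu}_x$, and then to obtain both identities by differentiating the Gaussian integral with respect to the mean vector. The guiding observation is that all of the $\boldsymbol{\mu}_x$-dependence of the normal density sits in the smooth kernel $\exp(-\tfrac12\|\bx-\boldsymbol{\mu}_x\|^2)$, so derivatives in $\boldsymbol{\mu}_x$ may be passed under the integral sign while $\varphi$ is only required to be measurable (with enough integrability that the displayed expectations are finite). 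Writing $I(\boldsymbol{\mu}_x)=\mathbb{E}[\varphi(\bX^\top\bX)]$, I would use $\nabla_{\boldsymbol{\mu}_x}\exp(-\tfrac12\|\bx-\boldsymbol{\mu}_x\|^2)=(\bx-\boldsymbol{\mu}_x)\exp(-\tfrac12\|\bx-\boldsymbol{\mu}_x\|^2)$ to get $\nabla_{\boldsymbol{\mu}_x}I=\mathbb{E}[(\bX-\boldsymbol{\mu}_x)\varphi(\bX^\top\bX)]$, hence $\mathbb{E}[\bX\varphi(\bX^\top\bX)]=\nabla_{\boldsymbol{\mu}_x}I+\boldsymbol{\mu}_x I$.

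Since $I$ depends on $\boldsymbol{\mu}_x$ only through $\Delta=\|\boldsymbol{\mu}_x\|^2$, write $I=h(\Delta)$ with $h(\Delta)=\mathbb{E}[\varphi(\chi^2_q(\Delta))]$; then $\nabla_{\boldsymbol{\mu}_x}I=2h'(\Delta)\boldsymbol{\mu}_x$ and the first identity becomes $\mathbb{E}[\bX\varphi(\bX^\top\bX)]=\boldsymbol{\mu}_x\,[\,h(\Delta)+2h'(\Delta)\,]$. The crux is then the degree-of-freedom shift identity $h(\Delta)+2h'(\Delta)=\mathbb{E}[\varphi(\chi^2_{q+2}(\Delta))]$. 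I would prove this from the Poisson-mixture representation $\chi^2_q(\Delta)\stackrel{d}{=}\chi^2_{q+2K}$ with $K\sim\mathrm{Poisson}(\Delta/2)$: writing $p_k(\Delta)=e^{-\Delta/2}(\Delta/2)^k/k!$ and $a_k=\mathbb{E}[\varphi(\chi^2_{q+2k})]$, one has $h=\sum_k p_k a_k$, and the elementary recurrence $p_k'(\Delta)=\tfrac12[p_{k-1}(\Delta)-p_k(\Delta)]$ gives $2h'=\sum_k p_k(a_{k+1}-a_k)$, so that $h+2h'=\sum_k p_k a_{k+1}=\mathbb{E}[\varphi(\chi^2_{q+2}(\Delta))]$. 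This establishes the first identity.

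For the second identity I would differentiate once more. Using $\nabla_{\boldsymbol{\mu}_x}\nabla_{\boldsymbol{\mu}_x}^\top\exp(-\tfrac12\|\bx-\boldsymbol{\mu}_x\|^2)=[(\bx-\boldsymbol{\mu}_x)(\bx-\boldsymbol{\mu}_x)^\top-\bI]\exp(-\tfrac12\|\bx-\boldsymbol{\mu}_x\|^2)$ and expanding $(\bX-\boldsymbol{\mu}_x)(\bX-\boldsymbol{\mu}_x)^\top$, I can solve for $\mathbb{E}[\bX\bX^\top\varphi(\bX^\top\bX)]$ in terms of the Hessian $\nabla\nabla^\top I$, the already-computed $\mathbb{E}[\bX\varphi(\bX^\top\bX)]$, and $I$. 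Evaluating the Hessian of $h(\Delta)$ gives $\nabla\nabla^\top I=2h'\bI+4h''\boldsymbol{\mu}_x\boldsymbol{\mu}_x^\top$, and after collecting terms the coefficient of $\bI$ turns out to be $h+2h'$ and the coefficient of $\boldsymbol{\mu}_x\boldsymbol{\mu}_x^\top$ turns out to be $h+4h'+4h''$. Recognizing that the shift operator $S[g]:=g+2g'$ advances the degrees of freedom by two (this is exactly the identity of the previous paragraph applied to $g=h$), applying it twice yields $S^2[h]=h+4h'+4h''=\mathbb{E}[\varphi(\chi^2_{q+4}(\Delta))]$, which gives the stated $\boldsymbol{\mu}_x\boldsymbol{\mu}_x^\top$ coefficient, while the $\bI$ coefficient $S[h]=\mathbb{E}[\varphi(\chi^2_{q+2}(\Delta))]$ matches $\boldsymbol{\Sigma}_q=\bI$.

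I expect the main obstacle to be purely technical: rigorously justifying that the two differentiations in $\boldsymbol{\mu}_x$ pass under the integral sign for a general measurable $\varphi$, which requires a local domination of the integrands $\|\bx-\boldsymbol{\mu}_x\|\,|\varphi(\|\bx\|^2)|$ and $\|\bx-\boldsymbol{\mu}_x\|^2\,|\varphi(\|\bx\|^2)|$ uniformly for $\boldsymbol{\mu}_x$ in a neighbourhood, together with termwise differentiation of the Poisson series. The algebra of collecting $\bI$- and $\boldsymbol{\mu}_x\boldsymbol{\mu}_x^\top$-terms is routine once the shift identity is in hand.
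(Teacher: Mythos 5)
Your core computation is correct, and it is necessarily a different route from the paper's, because the paper offers no proof at all: its ``proof'' of Lemma~\ref{lem_JB} is the single line citing \cite{judge1978bock}. Your argument --- set $I(\boldsymbol{\mu}_x)=\mathbb{E}[\varphi(\bX^{\top}\bX)]$, generate $\mathbb{E}[\bX\varphi(\bX^{\top}\bX)]$ and $\mathbb{E}[\bX\bX^{\top}\varphi(\bX^{\top}\bX)]$ by differentiating the Gaussian kernel in the mean, write $I=h(\Delta)$ with $\Delta=\boldsymbol{\mu}_x^{\top}\boldsymbol{\mu}_x$, and convert the resulting combinations $h+2h'$ and $h+4h'+4h''$ into degree-of-freedom shifts via the operator $S[g]=g+2g'$ --- checks out in every detail I verified: the Hessian identity gives $\mathbb{E}[\bX\bX^{\top}\varphi]=(h+2h')\bI+(h+4h'+4h'')\boldsymbol{\mu}_x\boldsymbol{\mu}_x^{\top}$, one has $S^{2}[h]=h+4h'+4h''$, and the Poisson-weight recurrence $p_k'=\tfrac12(p_{k-1}-p_k)$ indeed yields $S[h](\Delta)=\mathbb{E}[\varphi(\chi^{2}_{q+2}(\Delta))]$ for arbitrary degrees of freedom, so iterating it twice is legitimate. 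This buys a self-contained proof in which $\varphi$ need only be measurable (with the displayed expectations finite) and both identities fall out of one differentiation scheme; the remaining technical duties you flag (domination to pass $\nabla_{\boldsymbol{\mu}_x}$ under the integral, and analyticity of $e^{\Delta/2}h(\Delta)$ as a power series in $\Delta$ to justify termwise differentiation) are real but routine.

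The genuine gap is your opening move: ``reduce the statement to the case $\bSigma_q=\bI_q$.'' You never say how, and no such reduction exists, because the lemma as printed, with a general covariance $\bSigma_q$, is false. The substitution $\bX=\bSigma_q^{1/2}\bZ$ turns $\bX^{\top}\bX$ into $\bZ^{\top}\bSigma_q\bZ$, which is not a non-central chi-square variate. Concretely, take $q=1$, $\varphi(t)=t$, $X\sim N(\mu,\sigma^{2})$: then $\mathbb{E}[X\varphi(X^{2})]=\mathbb{E}[X^{3}]=\mu^{3}+3\mu\sigma^{2}$, whereas the right-hand side $\mu\,\mathbb{E}[\chi^{2}_{3}(\Delta)]=\mu(3+\Delta)$ cannot match for any sensible $\Delta$ unless $\sigma^{2}=1$. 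So what you have actually proved is the identity-covariance version (the form in which Judge and Bock state the result, and the only form that is true); the defect is in the paper's statement rather than in your algebra, but your proof should say this outright --- assume $\bSigma_q=\bI_q$ and $\Delta=\boldsymbol{\mu}_x^{\top}\boldsymbol{\mu}_x$ --- instead of promising a reduction that cannot be carried out.
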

\begin{proof} It can be found in \cite{judge1978bock} \end{proof}
\begin{proof}[Proof of Proposition \ref{prop_vector_dist}]
Using the definition of asymptotic bias and $\SM=\FM+\bGamma_{11}^{-1}\bGamma_{12}\hbbeta_{2,\tau}^{\rm FM}$, we have
\begin{eqnarray*}
\mathcal{B}\left(\FM \right)&=&\mathbb{E}\left\{\underset{%
n\rightarrow \infty }{\lim }\sqrt{n}\left( \FM -\btau \right)\right\}
=\mathbf{0}_{p_1} \\
\mathcal{B}\left( \SM\right)&=&\mathbb{E}\left\{ \underset{n\rightarrow \infty }{\lim }\sqrt{n}\left( \SM -\btau\right) \right\} 
=\mathbb{E}\left\{ \underset{n\rightarrow \infty }{\lim }\sqrt{n}\left( \FM +\bGamma_{11}^{-1}\bGamma_{12}\boldsymbol{%
\widehat{\beta}}_{2,\tau}^{\rm FM}-\btau\right) \right\} 
=\bGamma_{11}^{-1}\bGamma_{12}\bgamma 
=\bdelta
\end{eqnarray*}
and also using the definition of conditional expectation, one may directly get
\begin{eqnarray*}
\bvt _{1}  &\sim& \mathcal{N} \left(\boldsymbol{0 }_{p_1}, \tau(1-\tau)\bGamma_{11.2}^{-1} \right),\\
\bvt _{2}  &\sim& \mathcal{N} \left(\bdelta, \tau(1-\tau)\bGamma_{11}^{-1} \right).
\end{eqnarray*}
We also compute the asymptotic covariance matrix of $\bvt_3$ as follows:
\begin{eqnarray*}
\Ups\left(\bvt_3,\bvt_3'\right)&=&Cov\left(\FM-\SM,\FM-\SM\right) 
=\bGamma_{11}^{-1}\bGamma_{12} {\mathcal Var}\left( \hbbeta_{2,\tau}^{\rm FM}\right) \bGamma_{21}\bGamma_{11}^{-1}\\
&=&\tau(1-\tau)\bGamma_{11}^{-1}\bGamma_{12}\bGamma_{22.1}^{-1} \bGamma_{21}\bGamma_{11}^{-1}
=\bPhi.
\end{eqnarray*}
Thus, $\bvt_3\sim{\mathcal{N}\left(-\bdelta,\bPhi\right)}$. We also need to compute $Cov\left(\bvt_1,\bvt_3\right)$ and $Cov\left(\bvt_2,\bvt_3\right)$. First we compute
\begin{eqnarray*}
Cov\left(\FM,\SM\right)&=&Cov\left(\FM,\FM +\bGamma_{11}^{-1}\bGamma_{12}\hbbeta_{2,\tau}^{\rm FM}\right) 
=Cov\left(\FM,\FM \right)+Cov\left(\FM,\bGamma_{11}^{-1}\bGamma_{12}\hbbeta_{2,\tau}^{\rm FM}\right)\\
&=&\tau(1-\tau)\bGamma_{11.2}^{-1}+\tau(1-\tau)\bGamma_{12}\bGamma_{21}\bGamma_{11}^{-1},
\end{eqnarray*}
then
\begin{eqnarray*}
Cov\left(\bvt_1,\bvt_3\right)&=&Cov\left(\FM,\hbbeta_{1,\tau}^{\rm FM}-\SM\right) 
=Cov\left(\FM,\FM\right)-Cov\left(\FM,\SM\right)\\
&=&-\tau(1-\tau)\bGamma_{12}\bGamma_{21}\bGamma_{11}^{-1}
=\bSigma_{12},\\
Cov\left(\bvt_2,\bvt_3\right)&=&Cov\left(\SM,\FM-\SM\right)
=Cov\left(\SM,\FM\right)-Cov\left(\SM,\SM\right) \\
&=& \tau(1-\tau)\left(\bGamma_{11.2}^{-1}+\bGamma_{12}\bGamma_{21}\bGamma_{11}^{-1} - \bGamma_{11}\right)
= \bSigma^*
\end{eqnarray*}
\end{proof}
\begin{proof}[Proof of Theorem \ref{ADB-Low-Linear}] The expressions of $\mathcal{B}\left(\FM\right)=\boldsymbol 0$ and $\mathcal{B}\left( \SM\right)=\bdelta$ are directly obtained from the Proposition \ref{prop_vector_dist}. The rests are also given as follows:

\begin{eqnarray*}
\mathcal{B}\left( \PT \right)&=&\mathbb{E}\left\{ \underset{n\rightarrow \infty }{\lim }\sqrt{n}\left( \PT -\btau\right) \right\} \\
&=&\mathbb{E}\left\{ \underset{n\rightarrow \infty }{\lim }\sqrt{n}\left( \FM -\btau\right) \right\}-\mathbb{E}\left\{ \underset{n\rightarrow \infty }{\lim }\sqrt{n}\left( \FM -\SM \right)\textrm{I}\left(\mathcal{W}_{n}\leq  \chi^2_{p_2,\alpha}\right) \right\}\\
&=&\bdelta H_{p_{2}+2}\left( \chi _{p_{2},\alpha
}^{2};\Delta \right)\\
\mathcal{B}\left( \SS \right)&=&\mathbb{E}\left\{ \underset{n\rightarrow \infty }{\lim }\sqrt{n}\left( \SS -\btau \right) \right\} \\
&=&\mathbb{E}\left\{ \underset{n\rightarrow \infty }{\lim }\sqrt{n}\left( \FM -\btau \right) \right\}-\mathbb{E}\left\{ \underset{n\rightarrow \infty }{\lim }\sqrt{n}\left( \FM -\SM \right)(p_{2}-2)\mathcal{W}_{n}^{-1} \right\}\\
&=&(p_{2}-2)\bdelta\mathbb{E}\left\{\chi _{p_{2}+2}^{-2}\left(\Delta\right)\right\}\\
\mathcal{B}\left( \PS \right)&=&\mathbb{E}\left\{ \underset{n\rightarrow \infty }{\lim }\sqrt{n}\left( \PS -\btau \right) \right\} \\
&=&\mathbb{E}\left\{ \underset{n\rightarrow \infty }{\lim }\sqrt{n}\left[ \SM +\left(\FM-\SM\right)\left(1-\textrm{I}(\mathcal{W}_{n}\leq p_2-2)\right)\right. \right. \\ 
&&\left.  \left.-\left(\FM-\SM\right)\left(p_2-2\right)\mathcal{W}_{n}^{-1}\textrm{I}\left(\mathcal{W}_{n}>p_{2}-2\right)-\btau \right] \right\}\\
&=&\bdelta H_{p_{2}+2}\left( p_2 -2;\Delta \right) +(p_{2}-2)\boldsymbol{\delta }\mathbb{E}\left\{ \chi _{p_{2}+2 }^{-2}\left(
\Delta \right)\textrm{I}\left( \chi _{p_{2}+2 }^{2}\left( \Delta \right)
>p_{2}-2\right) \right\}
\end{eqnarray*}
\end{proof}
\begin{proof}[Proof of Theorem~\ref{risk}]
The asymptotic covariance of suggested estimators are obtained as follows:

The asymptotic covariance of $\PT$ is
given by
\begin{eqnarray*}
\Ups\left( \FM \right) &=&\mathbb{E}\left\{
\underset{n\rightarrow \infty }{\lim }{n}\left( \FM -\btau \right)\left( \FM -\btau \right) ^{'}\right\}
= Cov\left( \bvt _{1},\bvt _{1}^{'}\right) +\mathbb{E}\left( \bvt_{1}\right) \mathbb{E}\left( \bvt_{1}^{'}\right) 
=\tau(1-\tau)\bGamma_{11.2}^{-1}\\
\Ups\left( \SM \right) &=&\mathbb{E}\left\{
\underset{n\rightarrow \infty }{\lim }{n}\left( \SM -\btau \right)\left( \SM -\btau \right) ^{'}\right\}
= Cov\left( \bvt _{2},\bvt _{2}^{'}\right) +\mathbb{E}\left( \bvt_{2}\right) \mathbb{E}\left( \bvt_{2}^{'}\right) 
=\tau(1-\tau)\bGamma_{11}^{-1}+\bdelta \bdelta^{'} \\
\Ups\left( \PT \right)
&=&\mathbb{E}\left\{ \underset{n\rightarrow \infty }{\lim }{n}\left( \PT -\btau\right)\left( \PT -\btau\right) ^{'}\right\} \no \\
&=&\mathbb{E}\left\{ \underset{n\rightarrow \infty }{\lim }n\left[ \left( \FM -\btau \right) -\left( \FM -\SM \right) \textrm{I}\left( %
\mathcal{W}_{n}< \chi^2_{p_2,\alpha} \right) \right] \right.\\
&&\times \left. \left[ \left( \FM -\btau\right) -\left( \FM -\SM \right) \textrm{I}\left( \mathcal{W}_{n}< \chi^2_{p_2,\alpha} \right) \right] ^{'}\right\} \no \\
&=&\mathbb{E}\left\{ \left[ \bvt_{1}-\bvt_{3}\textrm{I}\left( \mathcal{W}_{n}<
c_{n,\alpha }\right) \right] \left[ \bvt_{1}-\bvt_{3}\textrm{I} \left(\mathcal{W}_{n}< \chi^2_{p_2,\alpha} \right) \right] ^{'}\right\} \\
&=&\mathbb{E}\left\{ \bvt_{1}\bvt_{1}^{'}-2\bvt_{3}\bvt_{1}^{'}\textrm{I}\left( \mathcal{W}_{n}< \chi^2_{p_2,\alpha}\right) +\bvt_{3}\bvt_{3}^{'}\textrm{I}\left( \mathcal{W}_{n}< \chi^2_{p_2,\alpha} \right)
\right\} \text{.}
\end{eqnarray*}
Considering, $\mathbb{E}\left\{ \bvt_{1}\bvt_{1}' \right\} = \tau(1-\tau)\bGamma_{11.2}^{-1}$ 
and 
$
\mathbb{E}\left\{ \bvt_{3}\bvt_{3}^{'}I\left( \mathcal{W}_{n}<
\chi _{p_{2},\alpha}^2
\right) \right\} =\bPhi \mathbb{H}_{p_{2}+2}\left(
\chi _{p_{2},\alpha }^{2};\Delta \right) +\bdelta \bdelta ^{'}\mathbb{H}_{p_{2}+4}\left( \chi _{p_{2},\alpha
}^{2};\Delta \right)$ by \cite{judge1978bock}, we have the following
\begin{eqnarray*}
\mathbb{E}\left\{ \bvt_{3}\bvt_{1}'I\left( \mathcal{W}_{n}< \chi^2_{p_2,\alpha} \right) \right\} &=& \mathbb{E}\left\{ \mathbb{E}\left( \bvt_{3}\bvt_{1}^{'}\textrm{I}\left( \mathcal{W}%
_{n} < \chi^2_{p_2,\alpha} \right) |\bvt_{3}\right) \right\} =\mathbb{E}\left\{ \bvt_{3}\mathbb{E}\left( \bvt_{1}^{'}\textrm{I}\left( \mathcal{W}%
_{n}\leq \chi^2_{p_2,\alpha} \right) |\bvt_{3}\right) \right\} \\
&=&\mathbb{E}\left\{ \bvt_{3}\left( \boldsymbol 0+\bSigma_{12}\bPhi^{-1}\left(\bvt_3+\bdelta\right)\right)'\textrm{I}\left( \mathcal{W}%
_{n} < \chi^2_{p_2,\alpha} \right) \right\} \\
&=& \mathbb{E}\left\{ \bvt_{3}\bvt_{3}^{'}\bPhi^{-1}\bSigma_{21}\textrm{I}\left(\mathcal{W}%
_{n}< \chi^2_{p_2,\alpha} \right) \right\}+\mathbb{E}\left\{ \bvt _{3}\bdelta^{'}\bPhi^{-1}\bSigma_{21}\textrm{I}\left(\mathcal{W}%
_{n}< \chi^2_{p_2,\alpha} \right) \right\} \\
&=&\left[ \bPhi \mathbb{H}_{p_{2}+2}\left(
\chi _{p_{2},\alpha }^{2};\Delta \right) +\bdelta \bdelta ^{'}\mathbb{H}_{p_{2}+4}\left( \chi _{p_{2},\alpha
}^{2};\Delta \right)\right]\bPhi^{-1}\bSigma_{21} +\bdelta\bdelta^{'}\bPhi^{-1}\bSigma_{21}\mathbb{H}_{p_{2}+2}\left( \chi_{p_{2},\alpha}^{2};\Delta \right)\\
&=& \bSigma_{21} \mathbb{H}_{p_{2}+2}\left(
\chi _{p_{2},\alpha }^{2};\Delta \right)+ \bdelta \bdelta ^{'}\bPhi^{-1}\bSigma_{21}\left[\mathbb{H}_{p_{2}+4}\left( \chi _{p_{2},\alpha
}^{2};\Delta \right)+\mathbb{H}_{p_{2}+2}\left( \chi _{p_{2},\alpha
}^{2};\Delta \right)\right]
\end{eqnarray*}
So finally we have,
\begin{eqnarray}\label{cov_PT}
\Ups\left( \PT \right)
&=&\tau(1-\tau)\bGamma_{11.2}^{-1}-2\bSigma_{21} \mathbb{H}_{p_{2}+2}\left(
\chi _{p_{2},\alpha }^{2};\Delta \right)+\bdelta \bdelta ^{'}\bPhi^{-1}\bSigma_{21}\left[\mathbb{H}_{p_{2}+2}\left( \chi_{p_{2}}^{2};\Delta \right)-2 \mathbb{H}_{p_{2}+4}\left( \chi _{p_{2},\alpha
}^{2};\Delta \right)\right] \no
\\&&+\bPhi \mathbb{H}_{p_{2}+2}\left(
\chi _{p_{2},\alpha }^{2};\Delta \right) +\bdelta \bdelta ^{'}\mathbb{H}_{p_{2}+4}\left( \chi _{p_{2},\alpha
}^{2};\Delta \right)
\end{eqnarray}
The asymptotic covariance of $\SS$ is
given by
\begin{eqnarray*}
\Ups\left( \SS \right)
&=&\mathbb{E}\left\{ \underset{n\rightarrow \infty }{\lim }{n}\left( \SS-\btau \right)\left(
\SS-\btau \right) '\right\}  \\
&=&\mathbb{E}\left\{ \underset{n\rightarrow \infty }{\lim }n\left[ \left( \FM -\btau \right) -\left(
\FM-\SM\right) (p_{2}-2) \mathcal{W}_{n}^{-1}\right] \right. \\
&&\times \left. \left[ \left( \FM-\btau \right) -\left( \FM-\SM\right) (p_{2}-2) \mathcal{W}_{n}^{-1}%
\right] ^{'}\right\}  \\
&=&\mathbb{E}\left\{ \bvt_{1}\bvt_{1}'-2(p_{2}-2)
\bvt_{3}\bvt_{1}^{'}\mathcal{W}_{n}^{-1}+(p_{2}-2)^{2}\bvt _{3}\bvt_{3}^{'}\mathcal{W}%
_{n}^{-2}\right\} \text{.}
\end{eqnarray*}%
Now, by using Lemma~\ref{lem_JB}, we have
$
\mathbb{E}\left\{ \bvt_{3}\bvt_{3}^{'}\mathcal{W}_{n}^{-2}\right\}=\bPhi \mathbb{E}\left(\chi _{p_{2}+2}^{-4}\left(\Delta\right) \right) +\bdelta \bdelta '\mathbb{E}\left(\chi _{p_{2}+4}^{-4}\left(\Delta\right) \right)
$
 and also,
\begin{eqnarray*}
\mathbb{E}\left\{ \bvt_{3}\bvt_{1}'\mathcal{W}_{n}^{-1}\right\}
&=&\mathbb{E}\left\{ \mathbb{E}\left( \bvt_{3}\bvt_{1}'\mathcal{W}%
_{n}^{-1}|\bvt_{3}\right) \right\} 
=\mathbb{E}\left\{ \bvt_{3}\mathbb{E}\left( \bvt_{1}'\mathcal{W}_{n}^{-1}|\bvt_{3}\right) \right\}
=\mathbb{E}\left\{ \bvt _{3}\left(\bSigma_{12}\bPhi^{-1}\left(\bvt_3+\bdelta\right)\right)^{'}\mathcal{W}%
_{n}^{-1} \right\} \\
&=&\mathbb{E}\left\{\bvt_3\bvt_3'\bPhi^{-1}\bSigma_{21}\mathcal{W}_{n}^{-1} \right\} +\mathbb{E}\left\{\bvt_3\bdelta^{'}\bPhi^{-1}\bSigma_{21}\mathcal{W}_{n}^{-1}\right\} \\
&=&\left[ \bPhi \mathbb{E}\left\{\chi _{p_{2}+2}^{-2}\left(\Delta\right)\right\} +\bdelta \bdelta ^{'}\mathbb{E}\left\{\chi _{p_{2}+4}^{-2}\left(\Delta\right)\right\}\right]\bPhi^{-1}\bSigma_{21}+\bdelta\bdelta^{'}\bPhi^{-1}\bSigma_{21}\mathbb{E}\left\{\chi _{p_{2}+2}^{-2}\left(\Delta\right)\right\}\\
&=& \bSigma_{21}\mathbb{E}\left\{\chi _{p_{2}+2}^{-2}\left(\Delta\right)\right\}+ \bdelta \bdelta'\bPhi^{-1}\bSigma_{21} \left[ \mathbb{E}\left\{\chi _{p_{2}+4}^{-2}\left(\Delta\right)\right\}+\mathbb{E}\left\{\chi _{p_{2}+2}^{-2}\left(\Delta\right)\right\}\right]
\end{eqnarray*}%
Therefore, we obtain $\Ups\left( \SS \right)$ by combining all of the components:
\begin{eqnarray}\label{cov_S}
\Ups\left( \SS \right)&=&\tau(1-\tau)\bGamma_{11.2}^{-1}-2(p_{2}-2)\left\{\bSigma_{21}\mathbb{E}\left\{\chi _{p_{2}+2}^{-2}\left(\Delta\right)\right\}+ \bdelta \bdelta'\bPhi^{-1}\bSigma_{21} \left[ \mathbb{E}\left\{\chi _{p_{2}+4}^{-2}\left(\Delta\right)\right\}+\mathbb{E}\left\{\chi _{p_{2}+2}^{-2}\left(\Delta\right)\right\}\right]\right\} \no\\
&&+(p_{2}-2)^2\left\{\bPhi \mathbb{E}\left(\chi _{p_{2}+2}^{-4}\left(\Delta\right) \right) +\bdelta \bdelta '\mathbb{E}\left(\chi _{p_{2}+4}^{-4}\left(\Delta\right) \right)\right\}.
\end{eqnarray}
Finally, we compute  $\Ups\left( \PS \right)$ using
$\PS = \SS-\left(\FM-\SM \right)\left(1-(p_{2}-2)\mathcal{W}_{n}^{-1}\right)\textrm{I}\left(\mathcal{W}_n\leq p_{2}-2\right)$
as follows
\begin{eqnarray*}
\Ups\left( \PS \right)&=&\mathbb{E}\left\{\underset{n\rightarrow \infty }{\lim }{n}\left( \PS -\btau \right)\left( \PS -\btau \right) ^{'}\right\} \\
&=&\mathbb{E}\left\{\underset{n\rightarrow \infty }{\lim }{n}\left[\left(\SS  -\btau \right)-\left(\FM -\SM \right)\left(1-(p_{2}-2)\mathcal{W}_n^{-1}\right)\textrm{I}\left(\mathcal{W}_n\leq (p_{2}-2)\right)\right] \right.
\\
&&\times \left.\left[\left(\SS -\btau \right)-\left(\FM-\SM \right)\left(1-(p_{2}-2)\mathcal{W}_n^{-1}\right)\textrm{I}\left(\mathcal{W}_n \leq (p_{2}-2)\right)\right]^{'}
\right\} \\
&=&\Ups\left( \SS \right)-2\mathbb{E}\left\{\underset{n\rightarrow \infty }{\lim }{n}\left(\FM-\SM \right)\left(\SS -\btau \right)\left(1-(p_{2}-2)\mathcal{W}_n^{-1}\right)\textrm{I}\left(\mathcal{W}_n \leq (p_{2}-2)\right)\right\} \\
&&+\mathbb{E}\left\{\underset{n\rightarrow \infty }{\lim }{n}\left(\FM -\SM \right)\left(\FM -\SM \right)^{'}\left(1-(p_{2}-2)\mathcal{W}_n^{-1}\right)^{2}\textrm{I}\left(\mathcal{W}_n \leq (p_{2}-2)\right)\right\}\\
&=&\Ups\left( \SS \right)-2\mathbb{E}\left\{\underset{n\rightarrow \infty }{\lim }{n}\left(\FM-\SM \right)\left[\left(\FM -\btau \right)-(p_{2}-2)\left(\FM-\SM \right)\mathcal{W}_{n}^{-1}\right]' \right. \\
&&\times\left.\left(1-(p_{2}-2)\mathcal{W}_n^{-1}\right)\textrm{I}\left(\mathcal{W}_n\right)\right\} \\
&&+\mathbb{E}\left\{\underset{n\rightarrow \infty }{\lim }{n}\left(\FM-\SM\right)\left(\FM -\SM \right)^{'}\left(1-(p_{2}-2)\mathcal{W}_n^{-1}\right)^{2}\textrm{I}\left(\mathcal{W}_n\leq p_{2}-2\right)\right\} \\
&=&\Ups\left( \SS \right)-2\mathbb{E}\left\{\bvt_3\bvt_1'\left(1-(p_{2}-2)\mathcal{W}_n^{-1}\right)\textrm{I}\left(\mathcal{W}_n\leq (p_{2}-2)\right)\right\}\\
&&-2(p_{2}-2)\mathbb{E}\left\{\bvt_3\bvt_3^{'}\mathcal{W}_n^{-1}\left(1-(p_{2}-2)\mathcal{W}_n^{-1}\right)\textrm{I}\left(\mathcal{W}_n\leq p_{2}-2\right)\right\}\\
&&+\mathbb{E}\left\{\bvt_3 \bvt_3'\left(1-(p_{2}-2)\mathcal{W}_n^{-1}\right)^{2}\textrm{I}\left(\mathcal{W}_n\leq p_{2}-2\right)\right\} \\
&=&\Ups\left( \SS\right)-2\mathbb{E}\left\{\bvt_3\bvt_1^{'}\left(1-(p_{2}-2)\mathcal{W}_n^{-1}\right)\textrm{I}\left(\mathcal{W}_n\leq p_{2}-2\right)\right\} \\
&&-\mathbb{E}\left\{\bvt_3\bvt_3^{'}(p_{2}-2)^2\mathcal{W}_n^{-2}\textrm{I}\left(\mathcal{W}_n\leq p_{2}-2\right)\right\}+\mathbb{E}\left\{\bvt_3\bvt_3^{'}\textrm{I}\left(\mathcal{W}_n\leq p_{2}-2\right)\right\}.
\end{eqnarray*}
So, we need the following identities:
\begin{eqnarray*}
\mathbb{E}\left\{\bvt_3\bvt_3^{'}\textrm{I}\left(\mathcal{W}_n\leq p_{2}-2\right)\right\}&=&\bPhi\mathbb{H}_{p_{2}+2}\left(
p_{2}-2;\Delta \right)+\bdelta\bdelta^{'}\mathbb{H}_{p_{2}+4}\left(
p_{2}-2;\Delta \right), \\
\mathbb{E}\left\{\bvt_3\bvt_3^{'}\mathcal{W}_n^{-2}\textrm{I}\left(\mathcal{W}_n\leq p_{2}-2\right)\right\} &=& \bPhi \mathbb{E}\left\{\chi _{p_{2}+2}^{-4}\left(\Delta \right)\textrm{I}\left(\chi _{p_{2}+2}^{2}\left(\Delta\right)\leq p_{2}-2 \right)\right\}\\
&&+\bdelta\bdelta^{'}\mathbb{E}\left\{\chi _{p_{2}+2}^{-4}\left(\Delta \right)\textrm{I}\left(\chi _{p_{2}+2}^{2}\left(\Delta\right)\leq p_{2}-2 \right)\right\},
\end{eqnarray*}
and
\begin{eqnarray*}
\lefteqn{\mathbb{E}\left\{\bvt_3\bvt_1^{'}\left(1-(p_{2}-2)\mathcal{W}_n^{-1}\right)\textrm{I}\left(\bW_n\leq p_{2}-2\right)\right\} }\\
&=&\mathbb{E}\left\{\mathbb{E}\left[\bvt_3\bvt_1^{'}\left(1-(p_{2}-2)\mathcal{W}_n^{-1}\right)\textrm{I}\left(\bW_n\leq p_{2}-2\right)|\bvt_3\right]\right\} \\
&=&\mathbb{E}\left\{\bvt_3\mathbb{E}\left[\bvt_1^{'}\left(1-(p_{2}-2)\mathcal{W}_n^{-1}\right)\textrm{I}\left(\bW_n\leq p_{2}-2\right)|\bvt_3\right]\right\} \\
&=&\mathbb{E}\left\{\bvt_3\bvt_3^{'}\bPhi^{-1}\bSigma_{21}\left(1-(p_{2}-2)\mathcal{W}_n^{-1}\right)\textrm{I}\left(\bW_n\leq p_{2}-2\right)\right\} \\
&&+\mathbb{E}\left\{\bvt_3\bdelta^{'}\bPhi^{-1}\bSigma_{21}\left(1-(p_{2}-2)\mathcal{W}_n^{-1}\right)\textrm{I}\left(\bW_n\leq p_{2}-2\right)\right\} \\
&=&\bSigma_{21}\mathbb{E}\left\{\left(1-(p_{2}-2)\chi _{p_{2}+2}^{-2}\left(\Delta\right)\right)\textrm{I}\left(\chi _{p_{2}+4}^{2}\left(\Delta\right)\leq p_{2}-2\right)\right\}\\
&&+\bdelta\bdelta^{'}\bPhi^{-1}\bSigma_{21}\mathbb{E}\left\{1-(p_{2}-2)\chi _{p_{2}+4}^{-2}\left(\Delta\right)\textrm{I}\left(\chi _{p_{2}+4}^{2}\left(\Delta\right)\leq p_{2}-2\right)\right\} \\
&&+\bdelta\bdelta^{'}\bPhi^{-1}\bSigma_{21}\mathbb{E}\left\{1-(p_{2}-2)\chi _{p_{2}+2}^{-2}\left(\Delta\right)\textrm{I}\left(\chi _{p_{2}+2}^{2}\left(\Delta\right)\leq p_{2}-2\right)\right\}. 
\end{eqnarray*}
Therefore, we obtain
\begin{eqnarray}\label{cov_PS}
\Ups\left( \PS \right)&=&\Ups\left( \SS\right)-2\bSigma_{21}\mathbb{E}\left\{\left(1-(p_{2}-2)\chi _{p_{2}+2}^{-2}\left(\Delta\right)\right)\textrm{I}\left(\chi _{p_{2}+4}^{2}\left(\Delta\right)\leq p_{2}-2\right)\right\}\no\\
&&-2\bdelta\bdelta^{'}\bPhi^{-1}\bSigma_{21}\mathbb{E}\left\{1-(p_{2}-2)\chi _{p_{2}+4}^{-2}\left(\Delta\right)\textrm{I}\left(\chi _{p_{2}+4}^{2}\left(\Delta\right)\leq p_{2}-2\right)\right\}\no \\
&&-2\bdelta\bdelta^{'}\bPhi^{-1}\bSigma_{21}\mathbb{E}\left\{1-(p_{2}-2)\chi _{p_{2}+2}^{-2}\left(\Delta\right)\textrm{I}\left(\chi _{p_{2}+2}^{2}\left(\Delta\right)\leq p_{2}-2\right)\right\}\no \\
&&-(p_{2}-2)^2\bPhi \mathbb{E}\left\{\chi _{p_{2}+2}^{-4}\left(\Delta \right)\textrm{I}\left(\chi _{p_{2}+2}^{2}\left(\Delta\right)\leq p_{2}-2 \right)\right\}\no\\
&&-(p_{2}-2)^2\bdelta\bdelta^{'}\mathbb{E}\left\{\chi _{p_{2}+2}^{-4}\left(\Delta \right)\textrm{I}\left(\chi _{p_{2}+2}^{2}\left(\Delta\right)\leq p_{2}-2 \right)\right\}\no\\
&&+\bPhi\mathbb{H}_{p_{2}+2}\left(
p_{2}-2;\Delta \right)+\bdelta\bdelta^{'}\mathbb{H}_{p_{2}+4}\left(
p_{2}-2;\Delta \right).
\end{eqnarray}
Now, one can obtain the risks of the listed estimators by using the equation ~\ref{risk_def}.
\end{proof}

\end{document}